\documentclass[12pt,compress]{amsart}

\usepackage[numbers,sort&compress]{natbib}

\usepackage{amsmath}
\usepackage{amsthm}
\usepackage{amssymb}
\usepackage{mathrsfs}
\usepackage{scrextend}
\usepackage{enumerate}
\usepackage{longtable,booktabs,makecell,multirow,tabularx}
\usepackage{graphicx}
\usepackage{latexsym}
\usepackage{amssymb}
\usepackage{color}
\usepackage{enumitem}

\usepackage{hyperref}

\usepackage{geometry}
\geometry{right=3.0cm,left=3.0cm,top=3.0cm, bottom=3.0cm}

\newtheorem{theorem}{Theorem}[section]
\newtheorem{lemma}[theorem]{Lemma}
\newtheorem{corollary}[theorem]{Corollary}

\newtheorem{theoremA}{Theorem}

\newtheorem{theoremone}{Theorem}

\theoremstyle{definition}

\newtheorem{exampleone}[theoremone]{Example}
\newtheorem{definitionone}[theoremone]{Definition}

\theoremstyle{remark}

\numberwithin{equation}{section}

\makeatletter
\@namedef{subjclassname@2020}{\textup{2020} Mathematics Subject
	Classification}
\makeatother

\def\UU{{\mathfrak U}}
\def\FF{{\mathfrak F}}

\newcommand{\R}{\mathbb{\R}}

\usepackage{pifont}

\begin{document}
	\begin{sloppypar}
	

\title[On the $IC$-$\Pi$-property of subgroups of finite groups]{On the $ IC $-$ \Pi $-property of subgroups of finite groups}

\author[Z. Qiu]{Zhengtian Qiu}
\address[Zhengtian Qiu]{School of Mathematics and Statistics, Guangdong University of Technology, Guangzhou 510520, People's Republic of China}
\email{qztqzt506@163.com}

\author[S. Qiao]{Shouhong Qiao\textsuperscript{$ \ast $}}
\address[Shouhong Qiao]{School of Mathematics and Statistics, Guangdong University of Technology, Guangzhou 510520, People's Republic of China}
\email{qshqsh513@163.com
}

\thanks{\textsuperscript{$ \ast $}Corresponding author}

\date{}

\subjclass[2020]{20D10, 20D20}

\keywords{Finite group, $ IC $-$ \Pi $-property, $ p\mathfrak{U} $-hypercenter, $p$-supersoluble group}


\maketitle

\begin{abstract}
    Let $ H $ be a subgroup of a finite group $ G $. We say that $ H $ satisfies the $ \Pi $-property in $ G $ if $ | G/K : N_{G/K}((H \cap L)K/K)| $ is a $ \pi(( H \cap L)K/K )  $-number for any chief factor $ L/K $ of  $ G $; and we call that $ H $  satisfies the $ IC $-$ \Pi  $-property in $ G $ if  $ H\cap [H, G] $  satisfies the $ \Pi  $-property in $ G $. In this paper, we obtain a criterion of a normal subgroup  being contained in the $ p\mathfrak{U} $-hypercenter of a finite group by the  $IC$-$ \Pi $-property of some $ p $-subgroups.
\end{abstract}


    	
    	


\section{Introduction}

%


  All groups considered in this paper will be finite. Throughout the paper, $ G $ always denotes a finite group, $ p $ a fixed prime, $ \pi $
  a set of primes and $ \pi(G) $ the set of all primes dividing the group order $ |G| $. An integer $ n $ is called a \emph{$ \pi $-number}
  if all prime divisors of $ n $ belong to $ \pi $. In particular, an integer is called a $p$-number if it is a power of $p$.

  Let $ H $ be a subgroup of $ G $.  As everyone knows,   the normal closure $  H^{G} $ of $ H $ in $ G $ is the smallest normal subgroup of $ G $ containing $ H $ and it is equal to the product of $ H $ and $ [H, G] $, that is, $ H^{G}= H[H, G] $, where $ [H, G] $ is the commutator subgroup of $ H $ and $ G $. We denote a subgroup $ H $ intersecting the commutator subgroup of $ H $ and $ G $ by $ IC $, and we say that  a subgroup $ H $ has the \emph{ $ IC $-property } if the intersection of $ H $ and $ [H, G] $ satisfy some conditions. It is an interesting question to study  the relationship between $ IC $-properties of some subgroups and the structure of a finite group. Some  significant results had been given (for example, see \cite{Gao-Li-2021, Gao-Li-2023,Kaspczyk}).

  In 2011, Li \cite{li-2011} introduced  the definition of the $ \Pi $-property of
  subgroups of finite groups, which generalized many embedding properties of subgroups (see \cite[Section 2]{li-2011}). Let $ H $ be a subgroup of $ G $. We say that $ H $  satisfies the \emph{$ \Pi  $-property} in $ G $ if for any   chief factor $ L / K $ of $ G $, $ | G / K : N _{G / K} ( HK/K\cap L/K )| $ is a $ \pi (HK/K\cap L/K) $-number.
  In this paper, we introduce the following concept which combines the  $ \Pi $-property and $ IC $-property.

  \begin{definitionone}
  	Let $ H $ be a subgroup of $ G $. We say that $ H $  satisfies the \emph{$ IC $-$ \Pi  $-property}  in $ G $ if  $ H \cap [H, G] $  satisfies the $ \Pi  $-property in $ G $.
  \end{definitionone}

  Suppose that $ H $ is  a $ p $-subgroup of $ G $. If $ H $ satisfies the $ \Pi  $-property in $ G $, then $ H $ satisfies the $ IC $-$ \Pi  $-property  in $ G $ (see \cite[Lemma 2.1(6)]{Li-Miao}). However, the converse does not hold in general. Let us  see  the following example.

  \begin{exampleone}
  	Let  $ X $ be a non-abelian simple group such that a Sylow $ p $-subgroup of $ X $ has order $ p $. Let $ x $ be an element of $ X $ of order $ p $. Let $ Y=\langle y|y^{p}=1\rangle $ be a cyclic group of order $ p $.  Write $ G=X\times Y $. Then $ H=\langle xy \rangle $ has order $ p $. It is easy to see that $ H\cap [H, G]=1 $ satisfies the $\Pi$-property in $G$. For the $G$-chief factor $ G/Y $, we see that $ |G/Y:N_{G/Y}(HY/Y\cap G/Y)| $ is not a $p$-number. Hence $ H $ does not  satisfy the $\Pi$-property in $G$.
  \end{exampleone}

  As usual, we use $ \UU $ (respectively, $ p\UU $) to denote the class of supersoluble (respectively, $ p $-supersoluble) groups.   Let $Z_{\UU}(G)$ (respectively, $Z_{p\UU}(G)$ denote the \emph{$\UU$-hypercenter}  (respectively, \emph{$p\UU$-hypercenter}) of $G$, that is, the product of all normal subgroups $H$ of $G$ such that all chief factors (respectively, $p$-chief factors)  of $G$ below $H$ are cyclic.  For the \emph{generalized Fitting subgroup}  $ F^{*}(G) $ and the \emph{generalized $ p $-Fitting subgroup}
  $ F^{*}_{p}(G) $ of $ G $, please refer to \cite[Chapter 9]{Isaacs-2008}  and \cite{Adolfo-2009}.

   In this paper, we obtain a criterion of a normal subgroup $ N $ of $ G $ being contained in $ Z_{p\UU}(G) $ in terms of the $ IC $-$ \Pi $-property of some $ p $-subgroups. Our main result is as follows.

  \begin{theoremA}\label{main-result}
  	Let $ N $ be a normal subgroup of  $ G $ such that $ p $ divides the order of $ N $, and
  	$ X $ a normal subgroup of $ G $ satisfying $ F_{p}^{*}(N) \leq X \leq N $. Then $ N\leq Z_{p\UU}(G) $ if there exists a Sylow $ p $-subgroup $P$ of $ X $ satisfies the following:
  	
  	\begin{enumerate}[fullwidth,itemindent=1em,label=\rm{(\arabic*)}]
  		\item\label{1111} Every subgroup  of $ P $ of order $ d $  satisfies the  $ IC $-$ \Pi $-property in $ G $, where $ d $ is a
  		power of $ p $ with $ 1 < d < |P| $;
  		\item If $ p =d = 2 $ and $ P $ is non-abelian,  we further suppose that every  cyclic subgroup  of $ P $ of order $ 4 $ satisfies the $ IC $-$ \Pi $-property in $ G $.
  		
  	\end{enumerate}	
  	
  \end{theoremA}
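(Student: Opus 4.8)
The plan is to argue by minimal counterexample: assume the assertion fails and choose $(G,N,X,P,d)$ with $|G|\cdot|N|$ as small as possible. The strategy has two halves --- first reduce to the situation in which $N$ itself is a normal $p$-subgroup of $G$, and then convert the $IC$-$\Pi$-property of the order-$d$ subgroups into control over how $G$ acts on the $p$-chief factors below $N$, forcing each of them to be cyclic.

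\emph{Reduction to $N=O_p(N)$.} First I would show $O_{p'}(N)=1$: in $\overline G=G/O_{p'}(N)$ one has $F_p^*(\overline N)=F_p^*(N)/O_{p'}(N)$, while the $IC$-$\Pi$-property and the two numerical conditions pass to the quotient (routine consequences of the basic properties of the $\Pi$-property) and $O_{p'}(N)\le Z_{p\mathfrak{U}}(G)$ holds trivially, as it has no $p$-chief factor; minimality applied to $\overline G$ and the usual behaviour of $Z_{p\mathfrak{U}}$ under quotients then lift the conclusion to $G$. Next I would reduce to $N=F_p^*(N)$, so that $X=N$ and $P$ is a Sylow $p$-subgroup of $N$: if $F_p^*(N)<N$ then $F_p^*(N)\trianglelefteq X$, hence $P_0:=P\cap F_p^*(N)$ is Sylow in $F_p^*(N)$, its subgroups of order $d$ lie in $P$, and $P_0$ non-abelian forces $P$ non-abelian, so $(G,F_p^*(N),F_p^*(N))$ with $P_0$ still satisfies all the hypotheses (and $p\mid|F_p^*(N)|$ once $O_{p'}(N)=1$); by minimality $F_p^*(N)\le Z_{p\mathfrak{U}}(G)$, whence $N\le Z_{p\mathfrak{U}}(G)$ by the known fact that $F_p^*(N)\le Z_{p\mathfrak{U}}(G)$ forces $N\le Z_{p\mathfrak{U}}(G)$ (see e.g.\ \cite{Adolfo-2009}) --- a contradiction. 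Thus $N=F_p^*(N)=F^*(N)=O_p(N)E(N)$ (using $O_{p'}(N)=1$), with $E(N)$ the layer. Finally I would kill the layer: since $O_{p'}(N)=1$, no component of $N$ is a $p'$-group, so if $E(N)\ne1$ there is a component $Q$ whose nontrivial Sylow $p$-subgroup may be taken inside $P$; choosing a subgroup $H$ of $P$ of order $d$ (or a cyclic subgroup of order $4$ in the exceptional case) whose image meets a non-abelian $G$-chief factor $R/S$ of $\langle Q^G\rangle$ nontrivially, one uses that $R/S$ non-abelian gives $[R,G]S=R$, so the $IC$-$\Pi$-property of $H$ makes $|G/S:N_{G/S}((H\cap[H,G])S/S\cap R/S)|$ a $p$-number; this is impossible, since in a direct product of non-abelian simple groups the normalizer of a nontrivial $p$-subgroup has index divisible by a prime $\ne p$. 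Hence $E(N)=1$ and $N=O_p(N)$ is a nontrivial normal $p$-subgroup of $G$, with $P=N$.

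\emph{The chief-factor argument, and the main obstacle.} Assume now $N\not\le Z_{p\mathfrak{U}}(G)$; fix a $G$-chief series through $N$ and let $L/K$ be its first non-cyclic factor, so $K\le Z_{p\mathfrak{U}}(G)$, $K<L\le N$, and $L/K$ is an irreducible $\mathbb{F}_pG$-module of dimension $r\ge2$, necessarily non-central, so $[L,G]K=L$. The decisive point --- and this is exactly where the $IC$-$\Pi$-property, a priori strictly weaker than the $\Pi$-property, is nonetheless enough --- is that on $L/K$ it upgrades to the $\Pi$-property: for a subgroup $H$ with $K\le H\le L$ and $|H|=d$ (available when $|K|<d<|L|$), the modular law together with $[H,G]K=L$ gives $(H\cap[H,G])K=H$, so the $\Pi$-condition for $H\cap[H,G]$ on $L/K$ says that the $G$-orbit of the subspace $HK/K$ has $p$-power size. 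Letting $H$ range over all such subgroups, every $G$-orbit on the set of subspaces of $L/K$ of the fixed dimension $s=\log_p(d/|K|)\in\{1,\dots,r-1\}$ has $p$-power size; but that set has $\binom{r}{s}_p\equiv1\pmod p$ elements, so one of its members is $G$-invariant, contradicting the irreducibility of $L/K$. I expect the principal obstacle to be handling the positions of $d$ outside the range $|K|<d<|L|$ --- that is, matching the rigid value $d$ against the a priori unknown $p^{r}$ and against the Frattini structure of $N$ (the size of $\Phi(N)$ and non-split subquotients block the naive choice of $H$) --- together with the case $p=d=2$ with $N$ non-abelian, which must be treated by hand and is precisely where the hypothesis on cyclic subgroups of order $4$ is consumed. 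The layer-killing step is the secondary difficulty, and if the preliminary material already contains the $\Pi$-property analogue of the present theorem, much of the chief-factor argument can instead be obtained by quoting it, once the reduction ``$IC$-$\Pi$ implies $\Pi$ on $L/K$'' is established.
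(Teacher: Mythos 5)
Your central observation --- that for a non-cyclic $p$-chief factor $L/K$ and a subgroup $H$ with $K<H<L$, chief-ness forces $[H,G]K=L$ and then Dedekind gives $(H\cap[H,G])K=H$, so the $IC$-$\Pi$-property upgrades to the full $\Pi$-property on that factor, after which the count $\binom{r}{s}_p\equiv 1\pmod p$ produces an invariant subspace --- is correct and genuinely slicker than anything in the paper for the case it covers. But the proposal has real gaps, and they sit exactly where you predicted the difficulty would be, which means the proof is not complete. First, the counting argument only applies when $|K|<d<|L|$; when the first non-cyclic factor satisfies $|K|\ge d$ or $|L|\le d$ (and in the exceptional case $p=d=2$ with $P$ non-abelian) you have no argument at all. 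This is not a corner case: it is where essentially all of the paper's machinery lives (the separate treatment of $d=p$ and $d=|P|/p$ via Thompson critical subgroups, $\Omega(C)$, and Frattini arguments in Theorems \ref{min}, \ref{max}, \ref{order-d}, plus Lemma \ref{one-of} to force $|T|=p$ when $T$ sits awkwardly relative to $d$). Declaring it "the principal obstacle" does not discharge it.

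Second, two steps of your reduction are flawed as stated. (a) Passing from $(G,N,X)$ to $(G,F_p^*(N),F_p^*(N))$ replaces $P$ by $P_0=P\cap F_p^*(N)$, and the hypothesis $1<d<|P|$ need not survive: if $d\ge |P_0|$ the inductive hypothesis says nothing about $F_p^*(N)$. The correct move (the paper's Step 1) is to apply minimality to the pair $(G,X)$ with $X$ playing both roles, which keeps $P$ and $d$ unchanged, and then invoke $F_p^*(N)\le X\le Z_{p\UU}(G)\Rightarrow N\le Z_{p\UU}(G)$. (b) In the layer-killing step, the $\Pi$-condition you want to exploit concerns $(H\cap[H,G])S/S\cap R/S$, not $HS/S\cap R/S$; choosing $H$ so that $H$ meets $R/S$ nontrivially does not guarantee that $H\cap[H,G]$ does. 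When $S=1$ one can argue $[H\cap R,G]\ne 1$ forces $R=[H\cap R,G]\le[H,G]$ and hence $H\cap[H,G]\cap R=H\cap R\ne 1$ (this is precisely how the paper's Step 3 shows minimal normal subgroups of $G$ in $N$ are $p$-groups, using in addition $H\unlhd G_p$ to convert the $p$-power index into normality), but for a non-abelian chief factor $R/S$ with $S\ne 1$ the inclusion $H\cap[H,G]S\le (H\cap[H,G])S$ fails in general and your contradiction does not go through. The paper avoids ever confronting such a factor directly by inducting on $G/T$ for the (unique, $p$-group) minimal normal subgroup $T$ and importing the $p$-supersolubility of $N$ from \cite[Lemma 2.12--2.13]{Su-2014}; you would need an analogous device.
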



The proof of Theorem \ref{main-result} consists of  a large number of steps. The following results are the main stages of it.

  \begin{theoremA}\label{order-d}
  	Let $ P $ be  a normal $ p $-subgroup of $ G $. Then $ P\leq Z_{\UU}(G) $ if $ P $ satisfies the following:
  	
  	\begin{enumerate}[fullwidth,itemindent=1em,label=\rm{(\arabic*)}]
  		\item Every subgroup  of $ P $ of order $ d $  satisfies the  $ IC $-$ \Pi $-property in $ G $, where $ d $ is a
  		power of $ p $ with $ 1 < d < |P| $;
  		\item If $ p =d = 2 $ and $ P $ is non-abelian,  we further suppose that every  cyclic subgroup  of $ P $ of order $ 4 $ satisfies the $ IC $-$ \Pi $-property in $ G $.
  		
  	\end{enumerate}
  \end{theoremA}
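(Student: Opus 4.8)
The plan is to argue by contradiction: suppose the statement fails and choose a counterexample $(G,P)$ with $|G|+|P|$ minimal. I would begin by fixing a minimal normal subgroup $N$ of $G$ contained in $P$; since $N$ is a characteristically simple $p$-group it is elementary abelian, say $|N|=p^{n}$. Two observations will be used throughout. First, for any $H\le P$ the subgroup $[H,G]$ is normal in $G$ and contained in $P$, so $[H,G]\cap N\in\{1,N\}$; in particular, a nontrivial subspace $W$ of $N$ satisfies $[W,G]=N$ unless $W\le\Z(G)$. Second, the number of $k$-dimensional subspaces of a finite-dimensional $\mathbb{F}_{p}$-vector space is $\equiv 1\pmod p$, so a group acting on the set of $k$-dimensional subspaces with all orbits of $p$-power length must fix one of them. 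Using these, I would first rule out $N=P$: if $P$ were minimal normal, every $H\le P$ with $|H|=d$ would have $[H,G]\in\{1,P\}$; the case $[H,G]=1$ is impossible, since it forces $H\le\Z(G)$, hence $P\le\Z(G)$ and $P\le Z_{\UU}(G)$, contrary to the choice of $(G,P)$; so $[H,G]=P$ and $H=H\cap[H,G]$ has the $\Pi$-property, whence the chief factor $P/1$ makes the $G$-orbit of each subgroup of $P$ of order $d$ have $p$-power length, producing a proper nontrivial $G$-invariant subgroup of $P$ and contradicting minimality. Thus $N<P$.

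Next I would prove $|N|\le d$: otherwise $1<d<|N|$ and $(G,N)$ again satisfies the hypotheses of the theorem (every subgroup of $N$ of order $d$ is one of $P$, and clause $(2)$ is vacuous as $N$ is elementary abelian), with $|N|<|P|$; minimality gives $N\le Z_{\UU}(G)$, hence $|N|=p$, contradicting $|N|>d\ge p$. I would then show $n=1$. Assuming $n\ge 2$, the $G$-submodule $N\cap\Z(G)$ of $N$ must be trivial (otherwise $N\le\Z(G)$ and $|N|=p$), so $[W,G]=N$ for every nontrivial subspace $W$ of $N$. Since $d\ge|N|$, for suitable proper subspaces $W$ of $N$ one can choose $H\le P$ with $|H|=d$ and $H\cap N=W$; then $N=[W,G]\le[H,G]$, so $(H\cap[H,G])\cap N=H\cap N=W$, and the $\Pi$-property of $H\cap[H,G]$ applied to $N/1$ shows $|G:N_{G}(W)|$ is a power of $p$. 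Running this over a $G$-invariant family of proper subspaces of $N$ of one fixed dimension and counting modulo $p$ as above yields a proper nontrivial $G$-invariant subspace of $N$, contradicting the minimality of $N$. Hence $|N|=p$ and $N\le Z_{\UU}(G)$.

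To finish, if $|P|=|N|$ we are already done; otherwise I would pass to $G/N$. A subgroup of $P/N$ of order $d/|N|$ is $H/N$ for some $H\le P$ with $N\le H$ and $|H|=d$, and the $IC$-$\Pi$-property of such an $H$ in $G$ descends to the $IC$-$\Pi$-property of $H/N$ in $G/N$; so, when $d>|N|$, the pair $(G/N,P/N)$ satisfies the hypotheses of the theorem with $d$ replaced by $d/|N|$, and $1<d/|N|<|P/N|$. Minimality gives $P/N\le Z_{\UU}(G/N)$, and together with $N\le Z_{\UU}(G)$ (using $Z_{\UU}(G/N)=Z_{\UU}(G)/N$) this gives $P\le Z_{\UU}(G)$ — the desired contradiction.

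I expect the real obstacles to be twofold. The first is the bookkeeping of the parameter $d$ under quotients: the boundary case $d=|N|$ — which, by the previous steps, is exactly $d=p$ — cannot be reduced and must be handled directly, in the spirit of the classical minimal-subgroup theorems, paying separate attention to the elements of order $p^{2}$ of $P$; and the exceptional clause $(2)$ has to be propagated to $G/N$, which is delicate precisely when $p=2$. The second is the step $n=1$: one has to ensure that a subgroup of $P$ of order $d$ meeting $N$ in a prescribed subspace actually exists, which can fail — for instance when $P$ is abelian and $P$ modulo that subspace is cyclic — so the counting must be organised over the $G$-invariant set of subspaces that do arise this way, and the case of abelian $P$, where such subgroups are scarce, may need a separate treatment.
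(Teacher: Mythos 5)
Your skeleton --- minimal counterexample, a minimal normal subgroup $T$ of $G$ inside $P$, ruling out $T=P$, forcing $|T|<d$, passing to $G/T$ and recombining --- is essentially the same as Steps 2--5 of the paper's proof, and the pieces you do argue (e.g.\ using the $\Pi$-property on the chief factor $T/1$ to make normalizer indices $p$-powers and thereby manufacture $G$-invariant subgroups) are sound in outline. But the two points you flag as ``obstacles'' are not side issues; they are where the proof actually lives, and your sketch contains no idea for either. The first is the base case $d=p$. Your induction replaces $d$ by $d/|T|$, so it bottoms out at a pair where the hypothesis covers only subgroups of order $p$ (and, for $p=2$, cyclic subgroups of order $4$). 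At that point the hypothesis says nothing about $\langle x\rangle$ when $x$ has order $p^{k}$ with $k\geq 2$, and the ``choose $H$ of order $d$ with prescribed intersection'' device is unavailable. The paper handles this as a separate theorem (Theorem \ref{min}) whose engine is the Thompson critical subgroup: either $\Omega(C)<P$, in which case $\Omega(C)$ lies in the unique maximal $G$-invariant subgroup of $P$ and Lemma \ref{then} lifts $Z_{\UU}$-membership to $P$, or $\Omega(C)=P$ and Lemma \ref{exp} forces $\exp(P)\in\{p,4\}$, so that every element outside that maximal $G$-invariant subgroup generates a subgroup the hypothesis does cover. Nothing in ``in the spirit of the classical minimal-subgroup theorems'' substitutes for this exponent reduction; without it the induction does not close. (A small separate slip: in your $N=P$ case, $[H,G]=1$ gives $H\leq Z(G)$ and hence $1<H\unlhd G$ contradicting minimality of $N$; it does not give $P\leq Z(G)$.)

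The second gap is the descent of clause (2) to $G/T$. When $p=2$, $d/|T|=2$ and $P/T$ is non-abelian, a cyclic subgroup $U/T$ of $P/T$ of order $4$ pulls back to a subgroup $U$ of order $2d$, which your hypothesis does not cover. One must instead find a maximal subgroup $U_{1}$ of $U$ with $U=U_{1}T$ and $|U_{1}|=d$, and prove the identity $U\cap[U,G]=(U_{1}\cap[U_{1},G])T$; the paper does this by showing $T=[U_{1}\cap T,G]\leq[U_{1},G]$ and using $[U_{1}T,G]=[U_{1},G][T,G]$, with a further subcase analysis when $U$ is cyclic or $U_{1}\cap T=1$ (where $d=4$ and one falls back on Lemma \ref{satifies} and Theorem \ref{min}). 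Relatedly, the existence of a subgroup $H$ of order $d$ meeting $T$ in a prescribed $G_{p}$-invariant hyperplane --- which both your ``$n=1$'' step and the paper's endgame need --- is itself a construction (done in the paper via an intermediate $S\unlhd G_{p}$ of order $dp$ and a complement modulo $T_{1}\Phi(S)$, cf.\ Lemma \ref{one-of}), not something that can be assumed. As written, the proposal is an accurate roadmap with the genuinely hard portions left as acknowledged but unresolved gaps.
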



  \begin{theoremA}\label{minimal}
  	Let  $ N $ be a normal subgroup of $ G $ and $ P\in {\rm Syl}_{p}(N) $.
  	Then $ N\leq Z_{\UU}(G) $ if $ P $ satisfies the following:
  	
  	\begin{enumerate}[fullwidth,itemindent=1em,label=\rm{(\arabic*)}]
  		\item Every subgroup  of $ P $ of order $ p $  satisfies the  $ IC $-$ \Pi $-property in $ G $;
  		\item If $ p = 2 $ and $ P $ is non-abelian,  we further suppose that every  cyclic subgroup  of $ P $ of order $ 4 $ satisfies the $ IC $-$ \Pi $-property in $ G $.
  		
  	\end{enumerate}
  	
  \end{theoremA}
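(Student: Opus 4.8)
My plan is to argue by contradiction, choosing a pair $(G,N)$ for which the theorem fails with $|G|+|N|$ minimal, and to reach the goal by producing a $G$-chief series through $N$ all of whose factors are cyclic, which is exactly the assertion $N\le Z_{\UU}(G)$. The basic engine is induction through quotients: by \cite{Li-Miao} the $\Pi$-property, and hence the $IC$-$\Pi$-property, is inherited by images, so for any $M\trianglelefteq G$ with $M\le N$ one has $PM/M\in\mathrm{Syl}_{p}(N/M)$ and the hypotheses of the theorem transfer to $N/M$ in $G/M$; thus minimality lets me assume the conclusion for every proper quotient. I would also note that we may take $p\mid|N|$, since otherwise $P=1$ and the statement concerns only the remaining factors.

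The clean part of the argument reduces to a normal $p$-subgroup and invokes Theorem \ref{order-d}. Using induction on $G/O_{p'}(N)$ one sees that every $G$-chief factor of $N$ lying above $O_{p'}(N)$ is cyclic, which reduces the problem to showing that $O_{p'}(N)$ is itself $\UU$-hypercentrally embedded and thence to the case $O_{p'}(N)=1$. Granting $O_{p'}(N)=1$, I would prove $F^{*}(N)=O_{p}(N)=:P_{0}$, so that $P_{0}\trianglelefteq G$ is a $p$-subgroup with $C_{N}(P_{0})\le P_{0}$. Since $P_{0}=O_{p}(N)\le P$, every subgroup of $P_{0}$ of order $p$ (and, when $p=2$ and $P_{0}$ is non-abelian, every cyclic subgroup of order $4$) already satisfies the $IC$-$\Pi$-property in $G$; hence Theorem \ref{order-d} applies with $d=p$ and yields $P_{0}\le Z_{\UU}(G)$, i.e.\ every $G$-chief factor below $O_{p}(N)$ is cyclic.

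To promote this to the whole of $N$ I would exploit the self-centralizing property $C_{N}(P_{0})\le P_{0}$. Because $P_{0}\le Z_{\UU}(G)$, the group $G$ stabilizes a chain of normal subgroups of $P_{0}$ with cyclic factors; the subgroup of $\mathrm{Aut}(P_{0})$ that fixes this chain and acts on its factors through $\mathrm{Aut}(C_{p})$ is supersoluble, being a $p$-group extended by an abelian $p'$-group. As $N/P_{0}=N/C_{N}(P_{0})$ embeds into this group, the $G$-chief factors of $N$ above $P_{0}$ are forced to be cyclic, and together with the cyclic factors inside $P_{0}$ this gives a $G$-chief series of $N$ with all factors cyclic, i.e.\ $N\le Z_{\UU}(G)$, contradicting the choice of counterexample.

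The main obstacle is precisely the bridge from the $p$-local hypotheses, which only constrain one Sylow $p$-subgroup of $N$, to the behaviour of the non-$p$ and the non-abelian chief factors, and this is where the two reductions above must be justified. Two delicate points arise: (i) excluding non-abelian components of $N$ so that $F^{*}(N)=O_{p}(N)$, for which I would feed the chief factors lying inside a hypothetical component into the $\Pi$-property forced by the minimal subgroups and contradict simplicity, leaning on the structure theory of the generalized ($p$-)Fitting subgroup (\cite{Isaacs-2008,Adolfo-2009}); and (ii) securing the $\UU$-hypercentral embedding of $O_{p'}(N)$, needed to attain $O_{p'}(N)=1$, which is the genuinely hard residual case and which I expect to consume most of the argument. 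The extra order-$4$ hypothesis when $p=2$ enters exactly here, to rule out quaternion-type obstructions in the action of $N$ on the chief factors of $P_{0}$. I expect step (i), the elimination of non-abelian composition factors, to be the principal difficulty, since it is the point at which the single-prime data must be converted into global information about every chief factor of $N$.
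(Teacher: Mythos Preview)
The theorem as printed has a typo: the paper's proof actually establishes $N\le Z_{p\UU}(G)$, and with $Z_{\UU}(G)$ the statement is false. For $G=N=A_{4}$ and $p=3$ one computes $[P,G]=V_{4}$, whence $P\cap[P,G]=1$ and the $IC$-$\Pi$-hypothesis holds trivially, yet $A_{4}\nleq Z_{\UU}(A_{4})=1$ because of the non-cyclic chief factor $V_{4}$. So your ``genuinely hard residual case'' (ii), proving $O_{p'}(N)\le Z_{\UU}(G)$, is not hard but impossible; for the corrected target $Z_{p\UU}(G)$ it becomes trivial, since $O_{p'}(N)$ carries no $p$-chief factors. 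Two smaller inaccuracies: the $IC$-$\Pi$-property does \emph{not} pass to arbitrary quotients (Lemma~\ref{OVE} only covers $M\le H$ or $(|H|,|M|)=1$), so your blanket transfer of hypotheses to every $N/M$ is unjustified---when $M$ is a $p$-group the order-$p$ subgroups of $PM/M$ do not arise from order-$p$ subgroups of $P$; and $C_{N}(P_{0})=Z(P_{0})$ rather than $P_{0}$, though your stabilizer argument survives this, or can simply be replaced by Lemma~\ref{JG}.

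The genuine gap is step (i): you flag the elimination of components as the principal difficulty but supply no mechanism beyond ``feed chief factors into the $\Pi$-property''. This is precisely where the substance lies. After reducing to $O_{p'}(N)=1$ and disposing of the $p$-soluble case (as you do, via Theorem~\ref{order-d} on $O_{p}(N)$ together with a Skiba-type lemma), the paper attacks the non-$p$-soluble case directly: a minimal counterexample has a unique maximal $G$-invariant subgroup $U\le Z_{p\UU}(G)$; for any cyclic $H\le P$ of order $p$ or $4$ with $H\nleq U$ the $IC$-$\Pi$-property forces either $[H,G]\le U$ or $|G:N_{G}(HU)|$ to be a $p$-number, and in either case $N=H^{G}U=H^{P}U$ is $p$-soluble, a contradiction; hence every such $H$ lies in $U$, and \cite[Lemma 2.12]{Su-2014} then makes $N$ itself $p$-supersoluble, the final contradiction. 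Your sketch contains no analogue of this argument.
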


\begin{theoremA}\label{maximal}
	Let  $ N $ be a normal subgroup of $ G $ and $ P\in {\rm Syl}_{p}(N) $.
	If every maximal subgroup of $ P $ satisfies the  $ IC $-$ \Pi $-property in $ G $, then either $ |P| = p $ or $ N\leq Z_{p\UU}(G) $.
	
\end{theoremA}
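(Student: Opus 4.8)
The plan is to argue by contradiction, choosing $G$ to be a counterexample of least order; thus $|P|>p$ and $N\not\le Z_{p\UU}(G)$, while every maximal subgroup of $P$ satisfies the $IC$-$\Pi$-property in $G$. The device that powers the induction is that the hypothesis descends to proper quotients: if $K\trianglelefteq G$ with $K\le N$, then $PK/K\in{\rm Syl}_p(N/K)$, every maximal subgroup of $PK/K$ is of the form $MK/K$ with $M$ a maximal subgroup of $P$ containing $P\cap K$, and by the quotient behaviour of the $IC$-$\Pi$-property each $MK/K$ satisfies the $IC$-$\Pi$-property in $G/K$; since moreover $Z_{p\UU}(G/K)=Z_{p\UU}(G)/K$ whenever $K\le Z_{p\UU}(G)$, the quotient $G/K$ again satisfies the hypotheses with respect to $N/K$.

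With this in hand I would carry out the usual reductions. First, $O_{p'}(N)=1$: otherwise pass to $G/O_{p'}(N)$, where the Sylow $p$-subgroup of $N/O_{p'}(N)$ is still isomorphic to $P$, so minimality gives $N/O_{p'}(N)\le Z_{p\UU}(G/O_{p'}(N))$, and since $O_{p'}(N)\le Z_{p\UU}(G)$ (it carries no $p$-chief factor) this yields $N\le Z_{p\UU}(G)$, a contradiction. Second, every minimal normal subgroup of $G$ lies in $N$: if $R\trianglelefteq G$ is minimal with $R\cap N=1$ then $R$ centralizes $N$ and $R\cap P=1$, so the Sylow $p$-subgroup of $NR/R\cong N$ is again isomorphic to $P$; since membership of $N$ in $Z_{p\UU}(G)$ is detected in $G/R$ when $R$ centralizes $N$, minimality in $G/R$ reproduces the contradiction. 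Consequently $O_{p'}(G)=1$ and every minimal normal subgroup of $G$ has order divisible by $p$.

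The substance of the argument then splits in two. First I must exclude a non-abelian minimal normal subgroup $R\le N$ of $G$: here I would apply the $\Pi$-property of the $p$-subgroups $M\cap[M,G]$, for $M$ a maximal subgroup of $P$, to the chief factor $R$; since $O_p(R)=1$ the trace of such a subgroup on $R$ is a non-normal $p$-subgroup of $R$ whose $G$-normalizer has $p$-power index, and the structure of Sylow $p$-subgroups of direct products of non-abelian simple groups of order divisible by $p$ — whose maximal subgroups are so constrained — must be pushed to a contradiction (this is where classification-type input enters). After this every minimal normal subgroup of $G$ is elementary abelian of $p$-power order and sits in $O_p(G)\cap N$. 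Invoking Theorem~\ref{minimal} in the case $|P|=p^2$ (where $P$ is abelian, so the auxiliary hypothesis there is vacuous) gives $N\le Z_{\UU}(G)\le Z_{p\UU}(G)$, so I may assume $|P|>p^2$; then, for any minimal normal subgroup $R$ of $G$, passing to $G/R$ and noting that $R\le Z_{p\UU}(G)$ holds precisely when $|R|=p$ forces $|R|\ge p^2$.

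It remains to contradict the resulting configuration: $|P|>p^2$, $O_{p'}(G)=O_{p'}(N)=1$, and every minimal normal subgroup $R\le O_p(G)\cap N$ of $G$ elementary abelian of order at least $p^2$. Fixing such an $R$, I would distinguish $|P/R|>p$ from $|P/R|=p$: in the first case the induction hypothesis in $G/R$ gives $N/R\le Z_{p\UU}(G/R)$, so only the cyclicity of $R$ is missing, and this must be extracted from the $IC$-$\Pi$-property of those maximal subgroups of $P$ not containing $R$, via their traces on the irreducible $\mathbb{F}_p[G]$-module $R$; in the second case $R$ is a normal maximal subgroup of $P$ and contributes nothing of itself, so the same mechanism must be applied to the remaining maximal subgroups together with the $G$-chief factors lying inside $R$ to force $|R|=p$ once more. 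I expect this final step — ruling out a minimal normal $p$-subgroup of rank at least $2$ under the $IC$-$\Pi$-hypothesis on the maximal subgroups of $P$ — to be the principal obstacle, and, jointly with the simple-group input needed in the non-abelian case, it is where the preliminary lemmas relating the $\Pi$-property to chief factors will do the real work.
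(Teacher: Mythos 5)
Your overall strategy (minimal counterexample, descent to $G/T$ for a minimal normal subgroup $T\le N$, separate treatment of non-abelian and abelian $T$) points in the right direction, but the proposal has genuine gaps at exactly the places where the real work lies. First, the descent itself: you justify that each maximal subgroup $MT/T$ of $PT/T$ satisfies the $IC$-$\Pi$-property in $G/T$ ``by the quotient behaviour of the $IC$-$\Pi$-property'', but Lemma \ref{OVE} only covers the cases $T\le M$ and $(|M|,|T|)=1$, and neither holds here: $T$ has order divisible by $p$ and need not lie in $M$. The paper must prove this descent by hand: writing $P_{1}=L\cap P$ for a maximal subgroup $L/T$ of $PT/T$, one uses that $P_{1}\cap T=P\cap T$ is a \emph{Sylow} $p$-subgroup of $T$ to compute the $p$- and $p'$-parts of $|T\cap P_{1}[P_{1},G]|$ and conclude $T\cap P_{1}[P_{1},G]=(T\cap P_{1})(T\cap[P_{1},G])$, whence $P_{1}T\cap[P_{1},G]T=(P_{1}\cap[P_{1},G])T$ by Lemma \ref{equivalent} and the claim follows from Lemma \ref{over}. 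This is the central technical step of the theorem, not a routine citation, so as written your induction is not yet licensed.

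Second, the two steps you yourself flag as the ``principal obstacle'' are left undone, and neither needs classification input. For non-abelian $T$ (equivalently $T\not\le O_{p}(N)$) the paper first shows $T$ is the \emph{unique} minimal normal subgroup of $G$ in $N$ (via the descent step together with Theorem \ref{max} and Lemma \ref{Sylow}; your outline has no analogue of this uniqueness, yet it is what guarantees $T\le[M,G]$ whenever $[M,G]\ne1$), and then argues elementarily: choose a maximal subgroup $H$ of $P$ with $H\unlhd G_{p}$; applying the $\Pi$-property of $H\cap[H,G]$ to the chief factor $T/1$ gives that $|G:N_{G}(H\cap T)|$ is a $p$-number, and since $H\cap T\unlhd G_{p}$ this forces $H\cap T\unlhd G$, hence $H\cap T\le O_{p}(N)=1$ and $|P\cap T|=p$; repeating with a second $G_{p}$-invariant maximal subgroup containing $P\cap T$ (possible as $|P|\ge p^{3}$) yields $P\cap T=1$, contradicting $O_{p'}(N)=1$. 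For abelian $T$ with $|T|\ge p^{2}$ the paper splits on whether $T\le\Phi(P)$: if so, the descent step and \cite[Lemma 2.10]{Su-2014} finish; if not, one constructs a maximal subgroup $U$ of $P$ with $U\cap T=T_{1}$ a $G_{p}$-invariant maximal subgroup of $T$ (taking a complement of $T\Phi(P)/T_{1}\Phi(P)$ in $P/T_{1}\Phi(P)$) and applies the $IC$-$\Pi$-property of $U$ to $T/1$ to get $T_{1}\unlhd G$, i.e.\ $|T|=p$. Without the uniqueness step and these two concrete constructions, your argument does not close; with them, it becomes essentially the paper's proof.
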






\section{Preliminaries}

In this section, we give some lemmas that will be used in our proofs.


  \begin{lemma}[{\cite[Lemma 2.2]{li-2011}}]\label{over}
  	 Let $ H \leq G $ and $ N \unlhd G $. If $ H $ satisfies the  $ \Pi $-property in $ G $, then $ HN/N $ satisfies the  $ \Pi $-property in $ G $.
  		
  \end{lemma}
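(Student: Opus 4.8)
The plan is to reduce everything to the standard correspondence between chief factors of $G/N$ and those chief factors of $G$ that lie above $N$. (The conclusion is of course to be read as: $HN/N$ satisfies the $\Pi$-property in the quotient $G/N$.) First I would fix an arbitrary chief factor of $G/N$; by the correspondence theorem it has the shape $(L/N)/(K/N)$, where $L/K$ is a chief factor of $G$ with $N\leq K<L$, and the natural map induces an isomorphism $\theta\colon (G/N)/(K/N)\to G/K$ sending $gN(K/N)$ to $gK$.

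Next I would compute the image under $\theta$ of the subgroup whose normalizer index we must control. Writing $\bar G=G/N$, $\bar H=HN/N$, $\bar K=K/N$, $\bar L=L/N$, one has $\bar H\bar K=HNK/N=HK/N$ because $N\leq K$; hence $\theta$ carries $\bar H\bar K/\bar K$ onto $HK/K$ and carries $\bar L/\bar K$ onto $L/K$, and therefore carries the intersection $\bar H\bar K/\bar K\cap\bar L/\bar K$ onto $HK/K\cap L/K$. In particular these two subgroups are isomorphic, so they have the same set of prime divisors, and $\theta$ maps the normalizer of the first onto the normalizer of the second, preserving indices.

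Finally I would invoke the hypothesis that $H$ satisfies the $\Pi$-property in $G$: applied to the chief factor $L/K$ it gives that $|G/K:N_{G/K}(HK/K\cap L/K)|$ is a $\pi(HK/K\cap L/K)$-number. Transporting this back through $\theta$ shows that $|\bar G/\bar K:N_{\bar G/\bar K}(\bar H\bar K/\bar K\cap\bar L/\bar K)|$ is a $\pi(\bar H\bar K/\bar K\cap\bar L/\bar K)$-number, which is exactly the requirement of the $\Pi$-property of $HN/N$ in $G/N$ for this chief factor. Since the chief factor was arbitrary, the argument is complete. There is no genuine obstacle here: the only points needing care are the identification of chief factors of $G/N$ with chief factors of $G$ above $N$, and the observation that $N\leq K$ lets us absorb $N$ into $K$ so that $HN$ and $H$ have the same image modulo $K$.
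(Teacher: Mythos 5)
Your argument is correct: the third-isomorphism-theorem correspondence between chief factors of $G/N$ and chief factors $L/K$ of $G$ with $N\leq K$, together with the identity $HNK=HK$, transfers the $\Pi$-property condition verbatim, which is precisely the standard proof of this fact. Note that the paper itself gives no proof here — it quotes the statement as Lemma 2.2 of \cite{li-2011} — and your reasoning matches that source's routine argument (with the understanding, as you say, that the conclusion is to be read in $G/N$).
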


\begin{lemma}\label{OVE}
	Let $ N $ be a normal subgroup of $ G $. Assume that $ H\leq G $ and  $ H $ satisfies the  $ IC $-$ \Pi $-property in $ G $. Then
	
	\begin{enumerate}[fullwidth,itemindent=1em,label=\rm{(\arabic*)}]
		\item\label{i} If $ N \leq H $, then $ H/N $ satisfies the  $ IC $-$ \Pi $-property in $ G $.
		\item\label{ii}  If $ (|H|,|N|) = 1 $, then $ HN/N $ satisfies the  $ IC $-$ \Pi $-property in $ G $.
	\end{enumerate}
	
\end{lemma}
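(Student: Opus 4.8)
The plan is to deduce both parts from Lemma~\ref{over} by computing, inside the quotient $G/N$, the ``$IC$-intersection'' of the relevant subgroup and identifying it with the image $(H\cap[H,G])N/N$ of $H\cap[H,G]$. Once that identification is made, the hypothesis that $H\cap[H,G]$ satisfies the $\Pi$-property in $G$ together with Lemma~\ref{over} immediately gives that $(H\cap[H,G])N/N$ satisfies the $\Pi$-property in $G/N$, which is exactly the required conclusion.

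For \ref{i} I would argue as follows. Since $N\le H$, the standard identity $[H/N,G/N]=[H,G]N/N$ gives $(H/N)\cap[H/N,G/N]=(H\cap[H,G]N)/N$, and Dedekind's modular law (applicable because $N\le H$) yields $H\cap[H,G]N=(H\cap[H,G])N$. Hence $(H/N)\cap[H/N,G/N]=(H\cap[H,G])N/N$, and Lemma~\ref{over} finishes this case. This part is essentially formal.

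For \ref{ii} the coprimality first gives $H\cap N=1$, and two computations are needed. The first is $[HN,G]N=[H,G]N$: this follows from the commutator identity $[HN,G]=[H,G]^{N}[N,G]$ together with $[N,G]\le N$ and the fact that $[H,G]N$ is invariant under conjugation by elements of $N$ (as it contains $N$), so that $[H,G]^{n}N=[H,G]N$ for all $n\in N$. Consequently $[\overline{HN},\overline{G}]=[H,G]N/N$ and, via the modular law, $\overline{HN}\cap[\overline{HN},\overline{G}]=(HN\cap[H,G]N)/N=\big((HN\cap[H,G])N\big)/N$. The second, and main, computation is $H\cap[H,G]N=H\cap[H,G]$: if $h=zn$ with $h\in H$, $z\in[H,G]$, $n\in N$, then $n=z^{-1}h\in[H,G]H=H^{G}$, so $n\in N\cap H^{G}$; since $[H,G]\unlhd H^{G}$ and $H^{G}/[H,G]\cong H/(H\cap[H,G])$ has order dividing $|H|$, the coprimality $(|H|,|N|)=1$ forces the image of $N\cap H^{G}$ in $H^{G}/[H,G]$ to be trivial, i.e.\ $N\cap H^{G}\le[H,G]$, and therefore $h=zn\in[H,G]$. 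Putting these together, any $w=hn\in HN\cap[H,G]$ has $h\in H\cap[H,G]N=H\cap[H,G]$, so $w\in(H\cap[H,G])N$; hence $(HN\cap[H,G])N=(H\cap[H,G])N$ and $\overline{HN}\cap[\overline{HN},\overline{G}]=(H\cap[H,G])N/N$, after which Lemma~\ref{over} again applies.

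The only step that is not purely mechanical is the identity $N\cap H^{G}\le[H,G]$ in \ref{ii}: this is precisely where the hypothesis $(|H|,|N|)=1$ enters, and it is what one should expect to be the crux. Everything else is an exercise with the standard commutator identities ($[AB,C]=[A,C]^{B}[B,C]$ and its mates) and Dedekind's modular law.
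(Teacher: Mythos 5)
Your proposal is correct and follows essentially the same route as the paper: in both parts one identifies $HN/N\cap[HN/N,G/N]$ (respectively $H/N\cap[H/N,G/N]$) with $(H\cap[H,G])N/N$ and then applies Lemma~\ref{over}. The only difference is cosmetic: in part \ref{ii} the paper gets the identity $HN\cap[HN,G]N=(H\cap[H,G])N$ by an order count, whereas you get it element-wise from the coprimality fact $N\cap H^{G}\le[H,G]$, which is the same fact that underlies the paper's computation.
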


\begin{proof}
	
	\ref{i}  Clearly, $ H/N \cap [H/N, G/N] = (H \cap [H, G])N/N $. By Lemma \ref{over}, we see that $ H/N \cap [H/N, G/N] = (H \cap [H, G])N/N $ satisfies the $ \Pi $-property in $ G/N $.
	 This means that  $ H/N $ satisfies the  $ IC $-$ \Pi $-property in $ G/N $.
	
	\ref{ii} Since $ (|H|,|N|) = 1 $, we have  $$
    	\begin{aligned}
    	|HN \cap [H, G]| &= \frac{|HN||[H, G]|}{|HN[H, G]|} \\
    	&=\frac{|N\cap H[H, G]||H||[H, G]|}{|H[H, G]|}\\
    	&=|H\cap [H, G]||N\cap H[H, G]|\\
    	&=|H\cap [H, G]||N\cap [H, G]|.
    	\end{aligned}
    	$$
It follws that $  HN \cap [H, G] = (H \cap [H, G])(N \cap H[H, G]) $, and so $ HN \cap  [HN, G]N = HN \cap [H, G]N = (HN \cap [H, G])N = (H \cap [H, G])N $. This yields that $$ HN/N \cap [HN/N, G/N] = (HN \cap [HN, G]N)/N = (H \cap [H, G])N/N . $$ By Lemma \ref{over}, $ H/N $ satisfies the  $ IC $-$ \Pi $-property in $ G/N $.
\end{proof}

\begin{lemma}\label{satifies}
	Let $ T $ be a minimal normal subgroup of $ G $ and $ L\leq G $. Assume that $ | T | = | L | = p $. If $ LT $  satisfies the $ IC $-$ \Pi $-property in $ G $, then $ L $ also  satisfies the $ IC $-$ \Pi $-property in $ G $.
\end{lemma}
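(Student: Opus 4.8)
The plan is to reduce the statement to a single essential configuration and then to verify the $\Pi$-property of $L$ chief factor by chief factor. First, if $L=T$, then $LT=T\trianglelefteq G$, so $[T,G]\le T$ and $L\cap[L,G]=T\cap[T,G]=LT\cap[LT,G]$; the hypothesis then gives the conclusion at once. So assume $L\ne T$. Since $|L|=|T|=p$, we get $L\cap T=1$, and as $T\trianglelefteq LT$ with $|LT/T|=p$, the group $LT$ has order $p^{2}$. If $L\cap[L,G]=1$ there is nothing to prove, since the trivial subgroup satisfies the $\Pi$-property in $G$; so assume $L\le[L,G]$, i.e.\ $L\cap[L,G]=L$. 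Put $M:=LT\cap[LT,G]$. Since $L\le LT$ and $L\le[L,G]\le[LT,G]$, we have $L\le M\le LT$, and because no subgroup lies strictly between $L$ and the order-$p^{2}$ group $LT$, either $M=L$ or $M=LT$. In the first case $M=L$ satisfies the $\Pi$-property in $G$ by hypothesis, so $L$ satisfies the $IC$-$\Pi$-property and we are done.

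It remains to treat the case $M=LT$, so from now on $LT$ itself satisfies the $\Pi$-property in $G$, and I must deduce that $L$ does. Fix a chief factor $H/K$ of $G$ and write $\overline{X}$ for the image $XK/K$ in $\overline{G}=G/K$. Since $\overline{L}$ is a $p$-group, it suffices to show that either $\overline{L}\cap\overline{H}=1$ or $|\overline{G}:N_{\overline{G}}(\overline{L}\cap\overline{H})|$ is a $p$-number. The first alternative is fine, so suppose $\overline{L}\cap\overline{H}\ne1$; then $L\not\le K$, hence $|\overline{L}|=p$ and $\overline{L}\cap\overline{H}=\overline{L}\le\overline{H}$. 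The key claim is that in this situation $\overline{LT}\cap\overline{H}=\overline{L}$. Granting the claim, the $\Pi$-property of $LT$ at the chief factor $H/K$ says that $|\overline{G}:N_{\overline{G}}(\overline{LT}\cap\overline{H})|=|\overline{G}:N_{\overline{G}}(\overline{L})|$ is a $\pi(\overline{L})$-number, i.e.\ a $p$-number; this being the required condition at $H/K$, letting $H/K$ range over all chief factors shows that $L$ satisfies the $\Pi$-property in $G$, and since $L\cap[L,G]=L$, that $L$ satisfies the $IC$-$\Pi$-property in $G$.

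To prove the claim, consider first the case $T\le K$: then $\overline{LT}=\overline{L}$ trivially. Otherwise $T\cap K=1$ (as $|T|=p$), so $\overline{T}=TK/K$ is a normal subgroup of $\overline{G}$ of order $p$. If $\overline{L}=\overline{T}$, then $\overline{LT}=\overline{T}=\overline{L}$ again. Finally suppose $\overline{L}\ne\overline{T}$; then $\overline{LT}=\overline{L}\,\overline{T}$ has order $p^{2}$, and here I use that $\overline{H}$ is a \emph{minimal} normal subgroup of $\overline{G}$: if $\overline{T}\le\overline{H}$, then the nontrivial normal subgroup $\overline{T}$ of $\overline{G}$ would have to equal $\overline{H}$, forcing $\overline{L}\le\overline{H}=\overline{T}$ and so $\overline{L}=\overline{T}$, a contradiction. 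Hence $\overline{T}\not\le\overline{H}$, so $\overline{T}\cap\overline{H}=1$; then $\overline{L}\,\overline{T}\cap\overline{H}$ is a subgroup of the order-$p^{2}$ group $\overline{L}\,\overline{T}$ that contains $\overline{L}$ and is not all of $\overline{L}\,\overline{T}$ (otherwise $\overline{T}\le\overline{H}$), so it equals $\overline{L}$. This proves the claim.

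I expect the only real content of the argument to be this last claim. The point it guards against is that a chief factor $H/K$ might a priori contain the whole of $\overline{L}\,\overline{T}$, in which case the hypothesis on $LT$ would only control $N_{\overline{G}}(\overline{L}\,\overline{T})$ rather than $N_{\overline{G}}(\overline{L})$, and the orbit of $\overline{L}$ among the order-$p$ subgroups of $\overline{L}\,\overline{T}$ could fail to have $p$-power length; this is ruled out exactly because chief factors are minimal normal, while $\overline{T}$ is already normal in $\overline{G}$.
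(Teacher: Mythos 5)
Your proposal is correct, and its reduction is exactly the paper's: dispose of the cases $L=T$ and $L\cap[L,G]=1$, note that $L\le LT\cap[LT,G]\le LT$ forces $LT\cap[LT,G]$ to be $L$ or $LT$, and in the first case conclude immediately. The only divergence is in the remaining case, where $LT$ itself satisfies the $\Pi$-property: the paper simply cites \cite[Lemma 2.7]{Su-2014} to pass from the $\Pi$-property of $LT$ to that of $L$, whereas you prove this transfer directly, chief factor by chief factor. Your key claim --- that for a chief factor $H/K$ with $1\neq LK/K\cap H/K$ one has $LTK/K\cap H/K=LK/K$, because either $T\le K$, or $TK/K$ is a normal subgroup of $G/K$ of order $p$ which by minimality of $H/K$ either equals $H/K$ (forcing $LK/K=TK/K$) or meets it trivially --- is exactly the content that the cited lemma supplies, and your verification of it is sound, including the observation that the trivial-intersection case is vacuous since the normalizer of the trivial subgroup is all of $G/K$. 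So your argument buys self-containedness (no appeal to the external lemma of Su, Li and Wang) at the cost of a page of elementary bookkeeping, while the paper's version is shorter but leans on the literature; both are complete proofs of the lemma.
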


\begin{proof}
	If $ L=T $, there is nothing to prove. Hence we can assume that $ LT $ has order $ p^{2} $. If $ L \cap [L, G]=1 $, then we are done. Hence we may assume that  $ L\cap [L, G]=L $, that is, $  L \leq [L, G]\leq [LT, G]$. It follows that $ L\leq LT\cap [LT, G]\leq LT $. By hypothesis, $ LT\cap [LT, G] $ satisfies the $ \Pi $-property in $ G $. If $ LT \cap [LT, G]=L $, then $ L $ satisfies the $ \Pi $-property in $ G $. In particular,  $ L $   satisfies the $ IC $-$ \Pi $-property in $ G $, as wanted.  So we can assume that  $ LT \cap [LT, G]=LT $. This yields that $ LT $ satisfies the $ \Pi $-property in $ G $. By \cite[Lemma 2.7]{Su-2014}, we deduce that $ L $   satisfies the $ \Pi $-property in $ G $, and so  $ L $   satisfies the $ IC $-$ \Pi $-property in $ G $, as wanted.
\end{proof}






Let $ P $ be a $ p $-group. Set $ \Omega(P)=\langle  x\in P\,|\, x^{4}=1 \rangle$ if $ P $ is a non-abelian $ 2 $-group;
set $ \Omega(P) =\langle x\in P\,|\, x^{p}=1 \rangle$, otherwise.


\begin{lemma}[{\cite[Lemma 4.3]{Guo-JA}}]\label{exp}
	 Let $ C $ be a Thompson critical subgroup (see \cite[p. 185]{Gorenstein-1980}) of a nontrivial
	$ p $-group $ P $.
	
	\begin{enumerate}[fullwidth,itemindent=1em,label=\rm{(\arabic*)}]
		\item If $ p $ is odd, then the exponent of  $ \Omega(C) $ is $ p $.
		\item If $ p = 2 $, then the exponent of $ \Omega(C) $ is at most $ 4 $. Moreover, if $ P $ is an abelian $ 2 $-group, then the exponent of $ \Omega(C) $ is $ 2 $.
	\end{enumerate}
\end{lemma}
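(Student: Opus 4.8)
The plan is to work from the defining properties of a Thompson critical subgroup $C$ of $P$ --- that $C$ is characteristic in $P$ with $\Phi(C)\le\Z(C)$, $[P,C]\le\Z(C)$ and $C_{P}(C)=\Z(C)$ --- of which, in fact, only the relation $\Phi(C)\le\Z(C)$ is really needed. Since $\Phi(C)=C'\mho^{1}(C)$, this relation says simultaneously that $C'\le\Z(C)$, so that $C$ has nilpotency class at most $2$, and that $x^{p}\in\Z(C)$ for every $x\in C$. In a group of class at most $2$ we have at our disposal the identities $[xy,z]=[x,z][y,z]$, $[x,y^{n}]=[x,y]^{n}$ and $(xy)^{n}=x^{n}y^{n}[y,x]^{\binom{n}{2}}$, and these do essentially all the work. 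Note also that $C\ne 1$, for otherwise $C_{P}(C)=P$ would equal $\Z(C)=1$, forcing $P=1$.

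For part (1), let $p$ be odd. Then $C$ has class at most $2\le p-1<p$, so $C$ is a regular $p$-group by P.~Hall's theorem; hence $\Omega(C)=\{x\in C:x^{p}=1\}$ is a subgroup of exponent $p$ (nontrivial because $C\ne 1$). Should one prefer to avoid invoking regularity, argue directly: if $x,y\in C$ with $x^{p}=y^{p}=1$, then $[y,x]^{p}=[y^{p},x]=1$ since $C'\le\Z(C)$, and therefore $(xy)^{p}=x^{p}y^{p}[y,x]^{\binom{p}{2}}=\bigl([y,x]^{p}\bigr)^{(p-1)/2}=1$ because $p$ is odd. Thus the set of elements of $C$ of order dividing $p$ is closed under products and inverses, so it coincides with $\Omega(C)$, which then has exponent $p$.

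For part (2), let $p=2$. If $C$ is abelian, then $\Omega(C)=\langle x\in C:x^{2}=1\rangle$ has exponent at most $2$; in particular this occurs when $P$ itself is abelian, since then $C\le P$ is abelian and a nontrivial abelian $2$-group contains an involution, which settles the last assertion of the lemma. Now assume $C$ is non-abelian, so $\Omega(C)=\langle x\in C:x^{4}=1\rangle$, and put $S=\{x\in C:x^{4}=1\}$. For $a,b\in S$, the power formula gives $(ab)^{2}=a^{2}b^{2}[b,a]$; since $a^{2},b^{2}\in\mho^{1}(C)\le\Z(C)$ and $[b,a]\in C'\le\Z(C)$, this element is central, so $(ab)^{4}=a^{4}b^{4}[b,a]^{2}=[b,a]^{2}$, and $[b,a]^{2}=[b^{2},a]=1$ as $b^{2}$ is central. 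Hence $ab\in S$; as $S$ is also closed under inverses and contains $1$, we get $\Omega(C)=S$, which has exponent at most $4$.

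The one point I would treat with care is the passage from ``each individual element powers trivially'' to ``$\Omega(C)$ has the stated exponent'': this requires the set of elements of order dividing $p$ (respectively $4$) to form a subgroup, which fails for a general $p$-group and is exactly what the class-at-most-$2$ hypothesis provides --- through regularity when $p$ is odd and through the explicit commutator computation above when $p=2$. Everything else is routine bookkeeping with class-$2$ commutator identities.
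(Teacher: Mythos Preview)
Your argument is correct. The paper does not actually prove this lemma: it is quoted verbatim from \cite[Lemma~4.3]{Guo-JA} and used as a black box, so there is no in-paper proof to compare against. What you have written is precisely the standard proof one finds in the literature (and essentially what lies behind the cited reference): exploit only the condition $\Phi(C)\le\Z(C)$, which forces $C$ to have nilpotency class at most $2$ and $p$th powers to be central, and then use the class-$2$ power-commutator identity $(xy)^{n}=x^{n}y^{n}[y,x]^{\binom{n}{2}}$ to show that the relevant set of bounded-order elements is multiplicatively closed.

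Two minor remarks for polish. First, note that the paper's $\Omega(\,\cdot\,)$ is defined relative to the group it is applied to, so $\Omega(C)$ is governed by whether $C$ (not $P$) is a non-abelian $2$-group; you handle this correctly by casing on $C$, and the ``$P$ abelian $\Rightarrow C$ abelian'' observation cleanly recovers the final clause of part~(2). Second, your parenthetical about avoiding regularity is well placed: the direct computation with $[y,x]^{p}=[y^{p},x]=1$ and $p\mid\binom{p}{2}$ for odd $p$ is both self-contained and exactly what the regularity argument unwinds to in class~$2$.
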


A class $ \FF $ of groups is called a \emph{formation}  if $ \FF $ is closed under taking homomorphic images and subdirect products.  A formation $ \FF $ is said to be \emph{saturated}  if $ G\in \FF $ whenever $ G/\Phi(G) \in \FF $.

\begin{lemma}[{\cite[Lemma 4.4]{Guo-JA}}]\label{then}
	Let $ \FF $ be a saturated formation, $ P $ be a normal $ p $-subgroup of  $ G $ and $ D = \Omega(C) $, where $ C $ is a Thompson critical subgroup of $ P $. If $ C \leq Z_{\FF}(G) $ or $ D \leq  Z_{\FF}(G) $,
	then $ P \leq  Z_{\FF}(G) $.
\end{lemma}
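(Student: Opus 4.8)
The plan is to reduce the hypothesis (``$C\le Z_{\FF}(G)$ or $D\le Z_{\FF}(G)$'') to the single condition $D\le Z_{\FF}(G)$, and then to promote $\FF$-centrality from the $G$-chief factors lying below $D$ to those lying below $P$. Since $C$ is characteristic in $P$ and $D=\Omega(C)$ is characteristic in $C$, both $C$ and $D$ are normal in $G$; and since $D\le C$, the hypothesis in either of its two cases implies $D\le Z_{\FF}(G)$, equivalently that every $G$-chief factor below $D$ is $\FF$-central. So I would assume this and aim to show that every $G$-chief factor below $P$ is $\FF$-central, which is exactly $P\le Z_{\FF}(G)$. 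Throughout I would use the standard description of the $\FF$-hypercenter for a saturated formation: if $\FF(p)$ denotes the $p$-component of a local definition of $\FF$, then a $G$-chief factor $H/K$ that is a $p$-group is $\FF$-central if and only if $G/C_G(H/K)\in\FF(p)$, and $\FF(p)$ is itself a formation, hence closed under homomorphic images and subdirect products.

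The one genuinely structural ingredient is that $C_{\mathrm{Aut}(P)}(D)$ is a $p$-group. This is a standard consequence of the theory of Thompson critical subgroups together with Lemma~\ref{exp}: a $p'$-automorphism of $P$ centralizing $D$ is already trivial on a critical subgroup, hence trivial, where for $p$ odd one uses $D=\Omega_1(C)$ and for $p=2$ one crucially uses the exponent-$\le 4$ version $D=\langle x\in C\mid x^4=1\rangle$ supplied by Lemma~\ref{exp} (see \cite{Gorenstein-1980}). Since conjugation embeds $G/C_G(P)$ in $\mathrm{Aut}(P)$, this says precisely that $C_G(D)/C_G(P)$ is a $p$-group.

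For the promotion step I would fix a $G$-chief series $1=D_0<D_1<\dots<D_m=D$ and set $S=\bigcap_{i=1}^{m}C_G(D_i/D_{i-1})$. Then $C_G(P)\le S\unlhd G$, each $G/C_G(D_i/D_{i-1})$ lies in $\FF(p)$, and $G/S$ is a subdirect product of these, so $G/S\in\FF(p)$. Moreover $S/C_G(P)$ is a $p$-group: a $p'$-element of $S$ stabilizes the $G$-invariant chain $1<D_1<\dots<D_m=D$ of the $p$-group $D$, so by coprime action it centralizes $D$, hence lies in $C_G(D)$, hence in $C_G(P)$ because $C_G(D)/C_G(P)$ is a $p$-group. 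Finally, let $W/W'$ be an arbitrary $G$-chief factor below $P$, viewed as an irreducible $\mathbb{F}_p[G/C_G(P)]$-module; the normal $p$-subgroup $S/C_G(P)$ of $G/C_G(P)$ has nonzero fixed points on $W/W'$, and these form a $G$-submodule of the irreducible module, so $S/C_G(P)$ acts trivially, i.e.\ $S\le C_G(W/W')$. Hence $G/C_G(W/W')$ is a homomorphic image of $G/S\in\FF(p)$ and therefore lies in $\FF(p)$, so $W/W'$ is $\FF$-central. Thus every $G$-chief factor below $P$ is $\FF$-central and $P\le Z_{\FF}(G)$.

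I expect essentially all the difficulty to be concentrated in the structural input, namely that $C_{\mathrm{Aut}(P)}(D)$ is a $p$-group, where the precise exponent-$4$ definition of $\Omega$ for non-abelian $2$-groups and Lemma~\ref{exp} are indispensable — a naive use of $\Omega_1$ already fails for $Q_8$, whose automorphism group contains elements of order $3$. Once that is in hand, the remaining steps are routine formation-theoretic bookkeeping together with a Clifford-type fixed-point argument, and the two-part form of the hypothesis is only a cosmetic variant of ``$D\le Z_{\FF}(G)$''.
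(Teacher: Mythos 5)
There is nothing in the paper to compare against: Lemma \ref{then} is quoted from \cite{Guo-JA} without proof, so your argument can only be measured against the standard argument behind that citation, and it matches it. Your reduction to the single hypothesis $D\leq Z_{\FF}(G)$, the use of the local (Gasch\"utz--Lubeseder--Schmid) description of the saturated formation $\FF$, the subgroup $S=\bigcap_i C_G(D_i/D_{i-1})$ with $G/S\in f(p)$, the observation that $S/C_G(P)$ is a $p$-group (stability of a chain in a $p$-group plus $C_G(D)/C_G(P)$ being a $p$-group), and the fixed-point argument showing $S\leq C_G(W/W')$ for every chief factor $W/W'$ below $P$ are all correct and constitute exactly the routine part of the known proof. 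The one place where the substance lies, and which you correctly isolate but only assert, is that a nontrivial $p'$-automorphism of $P$ cannot centralize $D=\Omega(C)$: for $p$ odd this follows from Gorenstein's results on critical subgroups (\cite{Gorenstein-1980}, Section 5.3) applied to $C$, using that $\Omega(C)$ has exponent $p$ by Lemma \ref{exp}, and for $p=2$ one needs the exponent-at-most-$4$ version of $\Omega$, precisely as you note (the $Q_8$ caveat is the right sanity check); this is the same classical input used in \cite{Guo-JA}. So your proposal is a faithful reconstruction of the cited proof rather than a genuinely different route, and it is correct modulo that standard, citable fact.
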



 \begin{lemma}[{\cite[Lemma 2.3]{Shen-Qiao}}]\label{Phi}
 	 Let $ P $ be a normal $ p $-subgroup of $ G $. Suppose that $ P/\Phi(P) \leq Z_{\UU}(G/\Phi(P)) $. Then $ P \leq Z_{\UU}(G) $.
 \end{lemma}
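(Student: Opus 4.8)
\emph{Strategy.} The plan is to prove the apparently stronger statement that every $G$-chief factor lying inside $P$ is cyclic; since $Z_{\UU}(G)$ is precisely the largest normal subgroup of $G$ all of whose $G$-chief factors are cyclic, this gives $P\le Z_{\UU}(G)$. The mechanism is that the action of $G$ on the whole of $P$ is controlled, modulo a $p$-group, by its action on the elementary abelian quotient $P/\Phi(P)$, and that the hypothesis forces the latter action to be ``triangular''.

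\emph{Controlling $C_G(P)$.} First I would fix notation: set $N_1=C_G(P)$ and $N_2=C_G(P/\Phi(P))$, noting $\Phi(P)\unlhd G$ since it is characteristic in $P$, so that $N_1\le N_2\unlhd G$. Two observations. (a) By Burnside's basis theorem the kernel of the restriction map $\operatorname{Aut}(P)\to\operatorname{Aut}(P/\Phi(P))$ is a $p$-group; as $G$ acts on $P$ by conjugation with kernel $N_1$ and $N_2$ is carried into that kernel, the quotient $N_2/N_1$ is a $p$-group. (b) The hypothesis $P/\Phi(P)\le Z_{\UU}(G/\Phi(P))$ is equivalent to saying that $G/\Phi(P)$ stabilises a complete flag of subspaces in the $\mathbb F_p$-space $P/\Phi(P)$; hence, identifying $G/N_2$ with its image in $\operatorname{GL}(P/\Phi(P))$, that image lies in the stabiliser of a complete flag, which is $\operatorname{GL}$-conjugate to the full upper triangular group. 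Consequently $G/N_2$ has a normal $p$-subgroup whose quotient embeds in a direct power of $\mathbb F_p^{\times}$, i.e. is abelian of exponent dividing $p-1$. Letting $R\unlhd G$ be the preimage in $G$ of that normal $p$-subgroup, $R/N_2$ is a $p$-group, so by (a) $R/N_1$ is a $p$-group, while $G/R$ is abelian of exponent dividing $p-1$. In short, $G/C_G(P)$ has a normal $p$-subgroup with quotient abelian of exponent dividing $p-1$.

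\emph{Chief factors of $P$ are cyclic.} Now let $H/K$ be any $G$-chief factor with $H\le P$; it is an elementary abelian $p$-group on which $\bar G:=G/C_G(H/K)$ acts faithfully and irreducibly. Since $C_G(P)\le C_G(H/K)$, the group $\bar G$ is an epimorphic image of $G/C_G(P)$, so it inherits a normal $p$-subgroup $Q$ with $\bar G/Q$ abelian of exponent dividing $p-1$. But a normal $p$-subgroup of a group acting faithfully and irreducibly on the $\mathbb F_p$-module $H/K$ has a nonzero fixed subspace, which is then a submodule and hence everything, forcing that $p$-subgroup to be trivial; thus $Q=1$ and $\bar G$ itself is abelian of exponent dividing $p-1$. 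A faithful irreducible action of such a group on a module over $\mathbb F_p$ is necessarily $1$-dimensional: the operators commute and have all eigenvalues in $\mathbb F_p$, so the module is a direct sum of common eigenlines, and irreducibility leaves only one. Hence $|H/K|=p$, i.e. $H/K$ is cyclic. Since $H/K$ was an arbitrary $G$-chief factor below $P$, we conclude $P\le Z_{\UU}(G)$.

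\emph{Where the difficulty is.} I do not expect a genuinely hard step; the argument is essentially the observation, packaged formation-theoretically, that $\UU$ admits a local definition $f$ with $f(p)$ closed under extensions by $p$-groups, which is exactly what renders the $p$-group kernel $N_2/N_1$ harmless. The point that needs care — and the natural trap — is that one should \emph{not} attempt an induction after factoring out a minimal normal subgroup $L\le\Phi(P)$: the Frattini subgroup of a normal $p$-subgroup of $G$ need not lie in $Z_{\UU}(G)$ (one can build a non-cyclic minimal normal $p$-subgroup sitting inside $\Phi(Q)$ for a normal $p$-subgroup $Q$ that fails the hypothesis), so such an induction would in any case have to re-invoke the hypothesis through $C_G(P/\Phi(P))$. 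The centralizer computation above does precisely that in one stroke and bypasses the induction entirely.
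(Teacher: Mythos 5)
Your proof is correct. The paper does not prove this lemma at all---it is quoted from \cite[Lemma 2.3]{Shen-Qiao}---and your argument is essentially the standard one behind that citation: the flag stabiliser in $\mathrm{GL}(P/\Phi(P))$ together with the Burnside fact that $C_G(P/\Phi(P))/C_G(P)$ is a $p$-group shows $G/C_G(P)$ is $p$-by-(abelian of exponent dividing $p-1$), i.e.\ lies in the canonical local definition of $\UU$ at $p$, whence every $G$-chief factor below $P$ is $\UU$-central; all the individual steps (trivial fixed points of a normal $p$-subgroup under a faithful irreducible action, simultaneous diagonalisation over $\mathbb F_p$) check out.
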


\begin{lemma}[{\cite[Chapter A, Lemma 2.1]{Doerk}}]\label{equivalent}
	Let $ U $, $ V $ and $ W $ be subgroups of $ G $. Then the following statements are equivalent:
	
	\begin{enumerate}[fullwidth,itemindent=1em,label=\rm{(\arabic*)}]
		\item $ U \cap VW = (U \cap V)(U \cap W) $;

		\item $ UV \cap UW = U(V \cap W) $.
	\end{enumerate}
	
\end{lemma}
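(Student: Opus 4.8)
The plan is a direct element chase, preceded by the observation that in each of (1) and (2) one inclusion is automatic. Since $U$ is a subgroup, $(U\cap V)(U\cap W)\subseteq U$ and $(U\cap V)(U\cap W)\subseteq VW$, so $(U\cap V)(U\cap W)\subseteq U\cap VW$ with no hypothesis; dually $U(V\cap W)\subseteq UV$ and $U(V\cap W)\subseteq UW$, so $U(V\cap W)\subseteq UV\cap UW$ always. Hence (1) is equivalent to the inclusion $U\cap VW\subseteq(U\cap V)(U\cap W)$ and (2) to $UV\cap UW\subseteq U(V\cap W)$, and it suffices to prove these two inclusions equivalent. Throughout one must keep in mind that $VW$, $UV$, $UW$ need not be subgroups, so products must be written in the correct order.

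For $(1)\Rightarrow(2)$, I would take $x\in UV\cap UW$ and write $x=u_1v=u_2w$ with $u_1,u_2\in U$, $v\in V$, $w\in W$. Then $a:=u_1^{-1}u_2=vw^{-1}$ lies in $U$ (product of elements of $U$) and in $VW$, hence in $U\cap VW$, so by (1) we may write $a=v'w'$ with $v'\in U\cap V$ and $w'\in U\cap W$. Comparing $vw^{-1}=v'w'$ gives $v=v'w'w$, whence $w'w=(v')^{-1}v$ lies in $V\cap W$; setting $t:=w'w$ we get $x=u_1v=(u_1v')t\in U(V\cap W)$, since $u_1v'\in U$. For the converse $(2)\Rightarrow(1)$, I would take $x\in U\cap VW$ and write $x=vw$ with $v\in V$, $w\in W$. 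As $x\in U$, the element $v=xw^{-1}$ lies in $UW$, and trivially $v\in UV$; so by (2) we have $v\in U(V\cap W)$, say $v=ut$ with $u\in U$, $t\in V\cap W$. Then $u=vt^{-1}\in U\cap V$, while $u^{-1}x=tw\in W$ and also $u^{-1}x\in U$, so $tw\in U\cap W$; hence $x=u(tw)\in(U\cap V)(U\cap W)$.

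I do not expect a genuine obstacle here: the argument is short and essentially symmetric. The only points that need a little care are (a) applying the hypothesis (1) to the ``mixed'' element $vw^{-1}$ rather than to $x$ itself, and (b) the bookkeeping of placing each newly produced factor into the correct intersection ($U\cap V$ versus $U\cap W$) and preserving the order of factors, since none of the relevant products is assumed to be normalized by anything.
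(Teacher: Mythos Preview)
Your proof is correct; the element chase works exactly as you describe, and the two points you flag as needing care are handled cleanly. The paper itself gives no proof of this lemma but simply cites \cite[Chapter~A, Lemma~2.1]{Doerk}, where the argument is essentially the same direct verification you have written.
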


\begin{lemma}[{\cite[Lemma 2.1.6]{Adolfo-2010}}]\label{Sylow}
	Let $ G $ be a $ p $-supersoluble group. Then $ G' $ is $ p $-nilpotent. In particular, if $ O_{p'}(G)=1 $, then $ G $ has a unique Sylow $ p $-subgroup.
\end{lemma}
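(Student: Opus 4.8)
The plan is to reduce both assertions to the key claim that \emph{if $G$ is $p$-supersoluble with $O_{p'}(G)=1$, then $O_p(G)$ is a Sylow $p$-subgroup of $G$ and $G/O_p(G)$ is abelian}. Granting this, the ``in particular'' statement is immediate, since a normal Sylow $p$-subgroup is the only Sylow $p$-subgroup. For the assertion that $G'$ is $p$-nilpotent, put $\bar G:=G/O_{p'}(G)$; a quotient of a $p$-supersoluble group is $p$-supersoluble and $O_{p'}(\bar G)=1$, so the key claim gives $\bar G/O_p(\bar G)$ abelian, whence $\bar G'\le O_p(\bar G)$ is a $p$-group. Now $G'\cap O_{p'}(G)$ is a normal $p'$-subgroup of $G'$, hence lies in $O_{p'}(G')$, and $G'/O_{p'}(G')$ is an epimorphic image of $G'/(G'\cap O_{p'}(G))\cong G'O_{p'}(G)/O_{p'}(G)=\bar G'$; therefore $G'/O_{p'}(G')$ is a $p$-group, i.e.\ $O_{p'}(G')$ is a normal $p$-complement of $G'$, which is precisely the $p$-nilpotence of $G'$.

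To prove the key claim I would argue as follows. A $p$-supersoluble group is $p$-soluble, so the Hall--Higman lemma (here the hypothesis $O_{p'}(G)=1$ enters) gives $C_G(O_p(G))\le O_p(G)$. Pick a chief series $1=P_0\triangleleft P_1\triangleleft\cdots\triangleleft P_m=O_p(G)\triangleleft\cdots\triangleleft G$ of $G$ passing through $O_p(G)$. Each $P_i/P_{i-1}$ is a chief factor of $G$ inside $O_p(G)$, hence a $p$-chief factor, hence cyclic of order $p$ by $p$-supersolubility, so $G/C_G(P_i/P_{i-1})$ embeds in $\operatorname{Aut}(C_p)\cong C_{p-1}$. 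Let $D:=\bigcap_{i=1}^{m}C_G(P_i/P_{i-1})$, a normal subgroup of $G$; then $G/D$ embeds in a direct product of copies of $C_{p-1}$, so $G/D$ is an abelian $p'$-group. On the other hand $D$ stabilises the chain $1=P_0\triangleleft\cdots\triangleleft P_m=O_p(G)$, so by P.\ Hall's stability theorem the group of automorphisms it induces on $O_p(G)$, namely $D/C_D(O_p(G))$, is a $p$-group. Since $C_D(O_p(G))=D\cap C_G(O_p(G))\le O_p(G)$ is a $p$-group, $D$ is a $p$-group, and being normal in $G$ it lies in $O_p(G)$; conversely $O_p(G)\le D$, because a $p$-group acts trivially on a group of order $p$. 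Hence $D=O_p(G)$, so $O_p(G)\in\operatorname{Syl}_p(G)$ and $G/O_p(G)=G/D$ is abelian, as claimed.

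The step I expect to be the main obstacle is the identification $D=O_p(G)$: the inclusion $O_p(G)\le D$ is routine, while the reverse inclusion genuinely uses both P.\ Hall's stability theorem --- applied to the $G$-chief series of $O_p(G)$, whose successive quotients are exactly the chief factors whose centralisers define $D$, so that $D$ really does stabilise this chain --- and the Hall--Higman inequality $C_G(O_p(G))\le O_p(G)$, which in turn requires $G$ to be $p$-soluble with $O_{p'}(G)=1$. Everything else (quotients of $p$-supersoluble groups are $p$-supersoluble, $O_{p'}(G/O_{p'}(G))=1$, and the passage from ``$\bar G'$ is a $p$-group'' to ``$G'$ is $p$-nilpotent'') is standard and I would handle it briefly.
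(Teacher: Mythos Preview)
The paper does not prove this lemma at all: it is quoted verbatim from \cite[Lemma 2.1.6]{Adolfo-2010} as a preliminary result, so there is no ``paper's own proof'' to compare against. Your argument is correct and self-contained. The reduction to the key claim is clean, and the proof of that claim via Hall--Higman plus Hall's stability theorem on the $G$-chief series of $O_p(G)$ is a standard and sound route; the identification $D=O_p(G)$ that you flag as the crux is handled correctly, with both inclusions justified. The only cosmetic point is that in deducing $p$-nilpotence of $G'$ you could shortcut slightly: once you know $G'/(G'\cap O_{p'}(G))\cong \bar G'$ is a $p$-group, the subgroup $G'\cap O_{p'}(G)$ is already a normal $p$-complement in $G'$, so the detour through $O_{p'}(G')$ is unnecessary (though not wrong).
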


\begin{lemma}[{\cite[Theorem B]{Skiba-2010}}]\label{JG}
	Let $ \FF $ be a formation. Let  $ E $ be a normal
	subgroup of $ G $ such that  $ F^{*}(E) \leq Z_{\FF}(G) $. Then $ E \leq Z_{\FF}(G) $.
\end{lemma}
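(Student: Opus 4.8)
The plan is to recast the conclusion as the assertion that every $G$-chief factor below $E$ is $\FF$-central, and to argue by contradiction through a counterexample $(G,E)$ with $|G|+|E|$ minimal. The hypothesis already gives that every $G$-chief factor below $F^{*}(E)$ is $\FF$-central, so the whole difficulty lies in the factors strictly between $F^{*}(E)$ and $E$. Three structural facts will drive the argument: first, the self-centralizing property $C_{E}(F^{*}(E))\le F^{*}(E)$ of the generalized Fitting subgroup (so that $E/Z(F^{*}(E))=E/C_{E}(F^{*}(E))$ embeds into $\operatorname{Aut}(F^{*}(E))$); second, that every minimal normal subgroup $N$ of $G$ with $N\le E$ actually lies in $F^{*}(E)$ (if $N$ is abelian it is a nilpotent normal subgroup of $E$, hence $N\le F(E)$; if $N$ is non-abelian its simple direct factors are components of $E$, hence $N\le E(E)$); and third, the standard behaviour of the hypercenter under quotients, namely $Z_{\FF}(G)N/N\le Z_{\FF}(G/N)$ for $N\unlhd G$, together with the converse-type principle that $N\le Z_{\FF}(G)$ and $E/N\le Z_{\FF}(G/N)$ force $E\le Z_{\FF}(G)$.

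With these in hand I would pick a minimal normal subgroup $N$ of $G$ contained in $E$; by the second fact $N\le F^{*}(E)\le Z_{\FF}(G)$, so $N$ is $\FF$-central and passage to $G/N$ is legitimate. The goal becomes to verify the inductive hypothesis $F^{*}(E/N)\le Z_{\FF}(G/N)$ and then apply minimality to $(G/N,E/N)$. Writing $M/N=F^{*}(E/N)$ with $M\unlhd G$, the inclusions $F(E)N/N\le F(E/N)$ and $E(E)N/N\le E(E/N)$ give $F^{*}(E)\le M\le E$; and since $F^{*}(E)\le M\unlhd E$, the monotonicity and idempotence of $F^{*}$ on normal subgroups give $F^{*}(M)=F^{*}(E)\cap M=F^{*}(E)\le Z_{\FF}(G)$.

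Now the argument splits. If $M<E$, then $(G,M)$ satisfies the hypothesis with $|G|+|M|<|G|+|E|$, so minimality yields $M\le Z_{\FF}(G)$; hence $F^{*}(E/N)=M/N\le Z_{\FF}(G)/N\le Z_{\FF}(G/N)$, applying minimality to $(G/N,E/N)$ gives $E/N\le Z_{\FF}(G/N)$, and the third fact then forces $E\le Z_{\FF}(G)$, contradicting the choice of counterexample. The remaining case, $M=E$, means $F^{*}(E/N)=E/N$, i.e.\ $E/N$ is quasinilpotent, and this is exactly where the generalized Fitting subgroup fails to commute with the quotient by $N$; I expect this to be the main obstacle. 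To resolve it I would exploit $C_{E}(F^{*}(E))\le F^{*}(E)$ to realize $E/Z(F^{*}(E))$ inside $\operatorname{Aut}(F^{*}(E))$ and then combine the $\FF$-centrality of all $G$-chief factors inside $F^{*}(E)$ with a stability-group analysis of the automorphisms induced on a $G$-chief series of $F^{*}(E)$: since $E/N$ is quasinilpotent it induces only inner automorphisms on its own chief factors, and this should force every $G$-chief factor of $E$ above $N$ to inherit $\FF$-centrality from the corresponding factor of $F^{*}(E)$, again producing a contradiction. The delicate point throughout is that $\FF$ is assumed only to be a formation and not necessarily saturated, so no Frattini or $\Phi$-reduction is available and the analysis must be carried out directly at the level of chief factors and the automorphism groups they induce.
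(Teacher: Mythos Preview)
The paper does not prove this lemma at all: it is quoted verbatim from \cite[Theorem B]{Skiba-2010} and used as a black box, so there is no ``paper's own proof'' to compare against. Your task was therefore to reconstruct a result whose original proof is external to this manuscript.

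On the substance of your proposal: the reduction framework is sound. Your observations that every minimal normal subgroup $N$ of $G$ inside $E$ lies in $F^{*}(E)$, that $F^{*}(M)=M\cap F^{*}(E)=F^{*}(E)$ for $M\unlhd E$ with $F^{*}(E)\le M$, and that the hypercenter behaves well under quotients are all correct and give a clean treatment of the case $M<E$.

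The genuine gap is the case $M=E$, i.e.\ $F^{*}(E/N)=E/N$. You acknowledge this is ``the main obstacle'' but then offer only a heuristic: embed $E/Z(F^{*}(E))$ into $\operatorname{Aut}(F^{*}(E))$ and invoke an unspecified ``stability-group analysis'' that ``should force'' the remaining chief factors to be $\FF$-central. This is not a proof, and it is precisely here that the real work in Skiba's argument lies. Note that induction is blocked in this case: applying the hypothesis to $(G/N,E/N)$ would require $F^{*}(E/N)\le Z_{\FF}(G/N)$, but $F^{*}(E/N)=E/N$, which is exactly the conclusion you seek. You also cannot assume $\FF$ is saturated or local, so the usual Frattini and local-definition machinery is unavailable, as you correctly note. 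Without a concrete mechanism linking the $\FF$-centrality of the chief factors of $F^{*}(E)$ to that of the chief factors of $E$ above $F^{*}(E)$---and Skiba's paper develops such a mechanism over several pages---your argument does not close. If you want to complete it, you should consult \cite{Skiba-2010} directly; the result is not elementary.
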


\begin{lemma}[{\cite[Lemma 2.11]{Su-2014}}]\label{su}
	Let $ p $ be a prime and $ \FF $ a  solvably saturated formation containing $ \UU $. Suppose that $ E $ is a normal subgroup of $ G $ and $  G/E \in \FF  $.
	
	\begin{enumerate}[fullwidth,itemindent=1em,label=\rm{(\arabic*)}]
		\item\label{su1} If $ E  \leq Z_{\UU}(G) $, then $ G\in \FF $.
		\item\label{su2} If $ E  \leq Z_{p\UU}(G) $, then $ G/O_{p'}(G)\in \FF $.
	\end{enumerate}
\end{lemma}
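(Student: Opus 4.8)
The plan is to deduce both statements from the standard fact that, for a solvably saturated formation $\FF$, a normal subgroup $E$ of $G$ with $E\leq Z_{\FF}(G)$ and $G/E\in\FF$ forces $G\in\FF$ (in our situation all the chief factors of $G$ below $E$ will turn out to be abelian, so the solvably saturated case is genuinely enough); everything then reduces to exhibiting suitable $\FF$-hypercentral subgroups.

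For \ref{su1} I would first observe that $Z_{\UU}(G)\leq Z_{\FF}(G)$: any chief factor $H/K$ of $G$ lying below $Z_{\UU}(G)$ is cyclic of some prime order $q$, so $G/C_{G}(H/K)$ embeds in the cyclic group ${\rm Aut}(H/K)$ of order $q-1$; hence $(H/K)\rtimes(G/C_{G}(H/K))$ is supersoluble, lies in $\UU\subseteq\FF$, and $H/K$ is $\FF$-central in $G$. Thus $E\leq Z_{\FF}(G)$ and the standard fact gives $G\in\FF$.

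For \ref{su2}, pass to $\bar G=G/O_{p'}(G)$ with $\bar E=EO_{p'}(G)/O_{p'}(G)$, so that $O_{p'}(\bar G)=1$, $\bar G/\bar E\in\FF$ (an epimorphic image of $G/E$), and $\bar E\leq Z_{p\UU}(\bar G)$ (the $p\UU$-hypercentre is carried along by quotient maps). I would then prove $\bar E\leq Z_{\UU}(\bar G)$, after which \ref{su1} gives $\bar E\leq Z_{\FF}(\bar G)$ and the standard fact yields $\bar G=G/O_{p'}(G)\in\FF$. To get $\bar E\leq Z_{\UU}(\bar G)$: from $\bar E\leq Z_{p\UU}(\bar G)$ every chief factor of $\bar E$ is a $p'$-group or is cyclic of order $p$, so $\bar E$ is $p$-supersoluble (in particular $p$-soluble), and $O_{p'}(\bar E)\leq O_{p'}(\bar G)=1$. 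By Lemma \ref{Sylow}, the Sylow $p$-subgroup $\bar P=O_{p}(\bar E)$ of $\bar E$ is unique, hence characteristic in $\bar E$ and normal in $\bar G$; moreover $\bar E'$ is $p$-nilpotent with $O_{p'}(\bar E')\leq O_{p'}(\bar G)=1$, so $\bar E'\leq\bar P$, $\bar E/\bar P$ is an abelian $p'$-group, and $C_{\bar E}(\bar P)\leq\bar P$ by $p$-solubility. Being a normal $p$-subgroup inside $Z_{p\UU}(\bar G)$, the group $\bar P$ has all its $\bar G$-chief factors of order $p$, so $\bar P\leq Z_{\UU}(\bar G)$ and $\bar G$ stabilizes a chief series $1=\bar P_{0}<\dots<\bar P_{r}=\bar P$ with $|\bar P_{i}/\bar P_{i-1}|=p$. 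Recording, for $m\bar P\in\bar E/\bar P$, the scalars by which $m$ acts on the layers $\bar P_{i}/\bar P_{i-1}$ defines a homomorphism $\bar E/\bar P\to\prod_{i}{\rm Aut}(\bar P_{i}/\bar P_{i-1})$; its kernel acts trivially on $\bar P$ by coprimality, hence lies in $C_{\bar E}(\bar P)\leq\bar P$ and is trivial, so the map is injective. Since each ${\rm Aut}(\bar P_{i}/\bar P_{i-1})\cong{\rm Aut}(C_{p})$ is abelian, all $\bar G$-conjugates of $m$ induce the same scalars and therefore have the same image; injectivity forces $[\bar G,\bar E]\leq\bar P$, i.e.\ $\bar E/\bar P\leq Z(\bar G/\bar P)$. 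Consequently every $\bar G$-chief factor inside $\bar E/\bar P$ has prime order, and together with $\bar P\leq Z_{\UU}(\bar G)$ this gives $\bar E\leq Z_{\UU}(\bar G)$, as desired.

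I expect the main obstacle to be precisely this last step of \ref{su2}: the $p\UU$-hypercentre imposes no a priori restriction on $p'$-chief factors, so one must exploit $O_{p'}(\bar G)=1$ to pin the $p'$-part of $\bar E$ into the centre modulo its normal Sylow $p$-subgroup, which is where the observation that ${\rm Aut}(C_{p})$ is abelian enters. Some care is needed to confirm that $\bar E$ is genuinely $p$-soluble with trivial $p'$-core, so that Lemma \ref{Sylow} and the containment $C_{\bar E}(O_{p}(\bar E))\leq O_{p}(\bar E)$ are legitimately available.
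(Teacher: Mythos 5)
The paper offers no proof of this lemma at all: it is quoted from \cite{Su-2014}, so there is no internal argument to compare yours with; judged on its own terms, your proof is correct. For \ref{su1} the whole content is your two steps: cyclic chief factors are $\FF$-central because $\UU\subseteq\FF$, and a normal subgroup all of whose $G$-chief factors are abelian and $\FF$-central can be adjoined to an $\FF$-quotient when $\FF$ is solubly saturated; but be aware that this last ``standard fact'' is essentially the entire substance of \ref{su1} (it is the Baer-local/composition-formation version of the Barnes--Kegel argument), so it needs a precise citation rather than a wave of the hand. Your treatment of \ref{su2} is the substantive part and it works: after factoring out $O_{p'}(G)$ one has $O_{p'}(\bar E)=1$ and $\bar E$ $p$-supersoluble, Lemma \ref{Sylow} gives the normal Sylow $p$-subgroup $\bar P=O_p(\bar E)$ with $C_{\bar E}(\bar P)\le\bar P$ and $\bar P\le Z_{\UU}(\bar G)$, and the abelianness of $\mathrm{Aut}(C_p)$ forces $[\bar E,\bar G]\le\bar P$, whence $\bar E\le Z_{\UU}(\bar G)$ and \ref{su1} finishes. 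Two details to tighten. First, the kernel of your homomorphism is a subgroup of $\bar E/\bar P$ and does not itself act on $\bar P$; argue with its preimage $K\le\bar E$: $K$ stabilizes the chain, so $K/C_K(\bar P)$ is a $p$-group (Kaloujnine--Hall, or a Schur--Zassenhaus complement plus coprime stabilization), and since $C_K(\bar P)\le\bar P$ while $K\bar P/\bar P$ is a $p'$-group, $K=\bar P$. Second, you tacitly use the standard meaning of $Z_{p\UU}(G)$, namely that every $G$-chief factor below it of order divisible by $p$ is cyclic; the paper's literal wording, which constrains only chief factors that are $p$-groups, would allow $E=G=A_5$ with $p=5$ below $Z_{p\UU}(G)$ and would falsify \ref{su2}, so your reading is the correct one and is the one used in \cite{Su-2014}.
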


\begin{lemma}[{\cite[Lemma 2.3]{Su-2014}}]\label{necessity}
	 Let $ p $ be a prime and $ G $ a group such that $ p $ divides the order of $ G $. Then every $ p $-subgroup $ L $ of $ Z_{p\UU}(G) $ satisfies the $ \Pi $-property in $ G $.
\end{lemma}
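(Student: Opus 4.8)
The plan is to show that for every chief factor $R/S$ of $G$ one has $N_{G/S}\big((LS/S)\cap(R/S)\big)=G/S$; since an index equal to $1$ is vacuously a $\pi$-number for every set of primes $\pi$, the $\Pi$-property of $L$ follows immediately. (I write $R/S$ for a generic chief factor to avoid a clash with the letter $L$ fixed in the statement.) Put $Z=Z_{p\UU}(G)$. Everything rests on two elementary observations: first $(LS/S)\cap(R/S)=(LS\cap R)/S$ with $S\le LS\cap R\le R$; and second, since $L\le Z$, Dedekind's modular law (which needs only $S\le R$) gives $LS\cap R\le ZS\cap R=(Z\cap R)S$.

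Then I would run a short case analysis on the normal subgroup $Z\cap R$ of $G$. If $Z\cap R\le S$, then $LS\cap R\le(Z\cap R)S=S$, so $(LS/S)\cap(R/S)=1$ and we are done. Otherwise $S<(Z\cap R)S\le R$, so minimality of the chief factor $R/S$ forces $(Z\cap R)S=R$; hence $R/S$ is $G$-isomorphic to $(Z\cap R)/(Z\cap R\cap S)$, which is a chief factor of $G$ lying below $Z=Z_{p\UU}(G)$. Now I split again: if $p\nmid|R/S|$ then $(LS/S)\cap(R/S)=1$, because $LS/S$ is a quotient of the $p$-group $L$; and if $p\mid|R/S|$, then by the defining property of the $p\UU$-hypercenter a chief factor of order divisible by $p$ lying below $Z_{p\UU}(G)$ is cyclic, so $|R/S|=p$, and $(LS/S)\cap(R/S)$, being a subgroup of a group of order $p$, is either trivial or all of $R/S$. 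In every one of these cases the intersection is normal in $G/S$, so its normalizer in $G/S$ is all of $G/S$ and the required index is $1$.

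The one subtlety — the nearest thing to an obstacle — is that $(LS\cap R)/S$ need not itself be a chief factor of $G/S$, so one cannot deduce ``trivial or everything'' straight from minimality; the modular-law inclusion $LS\cap R\le(Z\cap R)S$ is exactly the device that confines the intersection either into $S$ or into a chief factor of order $p$ below the hypercenter. As a preliminary I would record the standard fact used in the last step, that every chief factor of $G$ of order divisible by $p$ lying below $Z_{p\UU}(G)$ is cyclic; this follows because the class of normal subgroups all of whose $p$-chief factors are cyclic is closed under products, whence $Z_{p\UU}(G)$ is itself such a subgroup.
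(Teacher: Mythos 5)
Your argument is correct, but note that the paper itself offers no proof of this lemma: it is imported verbatim as \cite[Lemma 2.3]{Su-2014}, so there is no in-paper proof to compare against. Judged on its own, your proof is sound and even establishes slightly more than required, namely that $LS\cap R$ is always normal in $G$, so the relevant index is $1$ for every chief factor $R/S$. The key moves all check out: the modular law gives $LS\cap R\leq ZS\cap R=(Z\cap R)S$ since $S\leq R$; if $Z\cap R\leq S$ the intersection dies in $S$; otherwise minimality of $R/S$ forces $(Z\cap R)S=R$, and the natural $G$-isomorphism $R/S\cong (Z\cap R)/(Z\cap R\cap S)$ exhibits $R/S$ as ($G$-isomorphic to) a chief factor below $Z_{p\UU}(G)$, so either $p\nmid |R/S|$ and the intersection with the $p$-group $LS/S$ is trivial, or $|R/S|=p$ and the intersection is $1$ or $R/S$, normal in $G/S$ either way. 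The only step left implicit is the ``standard fact'' you flag at the end, that every chief factor of $G$ of order divisible by $p$ lying below $Z_{p\UU}(G)$ is cyclic; since the paper defines $Z_{p\UU}(G)$ as a product of normal subgroups with this property, your claim that the class is closed under products does need the short Jordan--H\"older-with-operators argument (a chief factor between $H$ and $HK$ is $G$-isomorphic, via $N_{i+1}=H(N_{i+1}\cap K)$ and the modular law, to a chief factor below $K$), and an arbitrary chief factor below $Z_{p\UU}(G)$ must then be matched against a fixed chief series. This is routine and standard, but it is the one place where your write-up waves rather than proves; spelling it out in a sentence or citing a textbook reference would make the proof fully self-contained.
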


\begin{lemma}\label{one-of}
	Let  $ N $ be a normal subgroup of  $ G $, $ P\in \mathrm{Syl}_{p}(N) $ and $ T $ the unique minimal normal subgroup of $ G $ contained in $ N $. Suppose that  every subgroup of $ P $ of order $ d $  satisfies the  $ IC $-$ \Pi $-property in $ G $,  where $ d $ is a power of $ p $ and $ 1 < d < |P| $.  Then $ T $ has order $ p $ if one of the following holds:
	
	\begin{enumerate}[fullwidth,itemindent=1em,label=\rm{(\alph*)}]
		\item\label{one} $ |T| = d $;
		\item\label{two} $ T $ is a $ p $-group, $ |T| < d $ and $ T\nleq \Phi(P) $.
	\end{enumerate}	
\end{lemma}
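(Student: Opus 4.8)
The plan is to assume $|T|>p$ and derive a contradiction by feeding the chief factor $T/1$ of $G$ into the $\Pi$-property supplied by the $IC$-$\Pi$-hypothesis on a suitably chosen subgroup of $P$ of order $d$.

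First I would record the easy reductions. In both cases $T$ is a minimal normal $p$-subgroup of $G$, hence elementary abelian, and $T\le O_{p}(N)\le P$; since $|T|<|P|$ we get $T<P$. As $[T,G]\unlhd G$ and $[T,G]\le T$, minimality gives $[T,G]=T$ or $[T,G]=1$, and in the latter case $T\le\Z(G)$, so every subgroup of $T$ is normal in $G$ and $|T|=p$, contrary to our assumption. Hence I may assume $[T,G]=T$. I would then fix a Sylow $p$-subgroup $P^{*}$ of $G$ with $P\le P^{*}$; since $N\unlhd G$ one has $P=P^{*}\cap N\unlhd P^{*}$, so $P^{*}$ normalises $P$, $T$ and $\Phi(P)$.

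Next I would construct a $P^{*}$-invariant subgroup $L\le P$ with $|L|=d$ and $1<V:=L\cap T<T$, together with the key facts that follow from $L\unlhd P^{*}$. In case (a), where $|T|=d$: since $P^{*}$ acts on the nontrivial $p$-group $P/T$ it fixes a subgroup of order $p$, giving a $P^{*}$-invariant $R$ with $T<R\le P$ and $|R:T|=p$; as $T$ is noncyclic, $R$ is noncyclic, and I would pick (possibly after enlarging $R$ inside $P$) a $P^{*}$-invariant maximal subgroup $L\ne T$ of $R$, so that $R=LT$ and $V=L\cap T$ has order $d/p$. In case (b), where $|T|<d$ and $T\nleq\Phi(P)$: I would pick a $P^{*}$-invariant maximal subgroup $M$ of $P$ with $T\nleq M$, so $|T\cap M|=|T|/p$, and then, since $|T\cap M|<|T|<d\le|M|$, pick a $P^{*}$-invariant $L$ with $T\cap M\le L\le M$ and $|L|=d$, so that $V=L\cap T=T\cap M$. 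In either case $1<V<T$, and because $L\unlhd P^{*}$ we have $[L,G]^{g}=[L^{g},G]=[L,G]$ for all $g\in P^{*}$, so $H:=L\cap[L,G]$ and $W:=H\cap T=V\cap[L,G]$ are $P^{*}$-invariant.

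Then comes the heart of the argument. By hypothesis $L$ satisfies the $IC$-$\Pi$-property in $G$, so $H=L\cap[L,G]$ satisfies the $\Pi$-property in $G$; applying this to the chief factor $T/1$ shows $|G:N_{G}(W)|$ is a $\pi(W)$-number, hence a power of $p$. But $W$ is $P^{*}$-invariant, so $P^{*}\le N_{G}(W)$ forces $|G:N_{G}(W)|$ to be coprime to $p$ as well; therefore $W\unlhd G$, and since $W\le V<T$ with $T$ minimal normal, $W=1$. Now $V\unlhd L$ (because $T\unlhd G$), so $[V,L]\le V\cap[L,G]=W=1$, i.e. $V\le\Z(L)$; combined with $V\le\Z(T)$ this makes $V$ central in $\langle L,T\rangle$. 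To close, I would refine the construction so as to force $[V,L]\ne1$ (then $W\ne1$, an immediate contradiction); this is possible unless the chosen $V$ happens to lie in $\Z(P)$, and that last configuration — a proper nontrivial $P^{*}$-invariant subgroup of $T$ central in $P$ — I would rule out by a short separate argument exploiting $[T,G]=T$ (and, in case (b), the freedom in the choice of $M$).

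The step I expect to be the main obstacle is the passage ``$|G:N_{G}(W)|$ a $p$-power $\;\Rightarrow\; W\unlhd G$'', whose only honest route is via a choice of the order-$d$ subgroup $L$ that is invariant under a full Sylow $p$-subgroup of $G$; making such a choice while simultaneously keeping $1<L\cap T<T$ (and, where possible, $[L\cap T,L]\ne1$) forces one to treat the low-rank cases — above all $|T|=p^{2}$ — by hand, and the residual ``centralising'' case $L\cap T\le\Z(L)$ needs its own small argument. Everything else (the reductions, the commutator inclusion $[V,L]\le V\cap[L,G]$, and the single application of the $\Pi$-property at $T/1$) is routine.
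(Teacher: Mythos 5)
Your overall strategy is the paper's: pick a subgroup of order $d$ whose intersection with $T$ is a maximal subgroup of $T$ invariant under a Sylow $p$-subgroup of $G$, feed the chief factor $T/1$ into the $\Pi$-property of $L\cap[L,G]$, and conclude that this intersection is normal in $G$, hence trivial by minimality of $T$. But your construction has a genuine existence gap: you insist that $L$ itself (case (a)), respectively the maximal subgroup $M$ of $P$ with $T\nleq M$ (case (b)), be invariant under the full Sylow subgroup $P^{*}$. Such subgroups need not exist. For instance, in case (a) the $P^{*}$-invariant maximal subgroups of $R$ correspond to $P^{*}$-invariant hyperplanes of $R/\Phi(R)$, and when $R/\Phi(R)$ is $2$-dimensional with $P^{*}$ inducing a transvection there is exactly one invariant hyperplane, which may well be $T/\Phi(R)$; the same phenomenon kills the choice of $M$ in case (b) when the image of $T$ lies in the unique $P^{*}$-invariant hyperplane of $P/\Phi(P)$. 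The phrase ``possibly after enlarging $R$ inside $P$'' does not repair this, since a maximal subgroup of a larger $R$ no longer has order $d$. The paper avoids the obstruction entirely: it never asks the order-$d$ subgroup to be Sylow-invariant, only its intersection with $T$. Concretely, it takes $S\unlhd G_{p}$ with $T\le S\le P$ and $|S|=dp$, first fixes $T_{1}\unlhd G_{p}$ maximal in $T$ with $\Phi(S)\le T_{1}$ (case (a)) or $T\cap\Phi(S)\le T_{1}$ (case (b)), and then obtains $H$ of order $d$ with $H\cap T=T_{1}$ as another maximal subgroup of the elementary abelian $S/T_{1}$, resp.\ a complement in $S/T_{1}\Phi(S)$; the $\Pi$-property argument only needs $H\cap T\unlhd G_{p}$.

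The second gap is your endgame. Having derived $W=1$, you propose to ``refine the construction so as to force $[V,L]\ne 1$'' and to dispose of the residual case $V\le Z(P)$ by an unspecified ``short separate argument'' — this is exactly the part left unproved, and it is not clear it can be completed as stated. It is also unnecessary, because you never use the hypothesis that $T$ is the \emph{unique} minimal normal subgroup of $G$ contained in $N$. Since $L\le P\le N$ gives $1\ne[L,G]\unlhd G$ with $[L,G]\le N$ (the case $[L,G]=1$ makes $V=L\cap T$ normal in $G$, an immediate contradiction), uniqueness forces $T\le[L,G]$, hence $W=V\cap[L,G]=V$; then your own argument $W\unlhd G$ plus minimality gives $V=1$, and since $|T:V|=p$ by construction this yields $|T|=p$ directly — no refinement, no case $V\le Z(P)$. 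This is precisely how the paper closes. So the proposal, as written, rests on Sylow-invariant choices that can fail to exist and on a sketched final step that is never carried out.
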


\begin{proof}
	Let $ G_{p} $ be a Sylow $ p $-subgroup of $ G $ containing $ P $. Then $ P=G_{p}\cap N $. If $ |T|=d $ or $ T $ is a $ p $-group with $ |T|<d $, then we can  choose a normal subgroup $ S $ of $ G_{p} $ such that  $ T\leq S\leq P $ and $ |S|=dp $.
	
	\ref{one} Since  $ |T| = d $, we have   $ \Phi(S) \leq T $. If $ \Phi(S) = T $, then $ S $ is cyclic, and so $ |T|=p $, we are done. Hence we may suppose that $ \Phi(S) < T $. Since  $ \Phi(S) \unlhd G_{p} $, $ G_{p} $ has a normal subgroup $ T_{1} $ such that  $ \Phi(S)\leq  T_{1} \leq T  $ and $ |T : T_{1}| = p $. Since $ S/T_{1} $
	is elementary abelian of order $ p^{2} $, there is another maximal subgroup $ H/T_{1} $ of $ S/T_{1} $ such that $ H \not = T $. Then $ H \cap T = T_{1} $. Clearly, $ |H|=d $. If $ [H, G]=1 $, then $ H\leq Z(G) $. Since  $ T $ is the unique minimal normal subgroup of $ G $ contained in $ N $, we deduce that   $ |T|=|H|=p $, as desired.  If $ [H, G]>1 $, then $ T\leq [H, G] $.
	By  hypothesis, $ H $ satisfies the $ IC $-$ \Pi $-property in $ G $. For the $ G $-chief factor $ T/1 $, we see that  $ |G:N_{G}(H\cap T)|=|G:N_{G}(T_{1})| $ is a $ p $-number. Since $ T_{1}=H\cap T\unlhd G_{p} $, we have $ T_{1}\unlhd G $. The minimality of $ T $ implies $ T_{1}=1 $, and hence $ |T|=p $, as desired.
	
	\ref{two} Assume that $ T $ is a $ p $-group, $ |T| < d $ and $ T\nleq \Phi(P) $. Clearly, $ T \nleq \Phi(S) $ and $ T \cap \Phi(S) < T $. Then $G_{p}$ has a normal subgroup $T_{1}$ such that $ T \cap \Phi(S) \leq T_{1} \leq  T  $ and
	 $ |T : T_{1}| = p $. Write $ \overline{S}=S/T_{1}\Phi(S) $. Then $ \overline{S} $ is elementary abelian and $ \overline{T}=T\Phi(S)/T_{1}\Phi(S) $ has order $ p $. There exists a complement $\overline{H} = H/T_{1}\Phi(S) $ for $ \overline{T} $ in $ \overline{S} $, and $ H $ has order $ d $.  Since $ T \not= H $ and
	$ T_{1} \leq H \cap  T $, we know that $ T_{1}=H\cap T\unlhd G_{p} $.  Arguing as in the previous paragraph, we can
	obtain that $ |T| = p $, as desired.
\end{proof}

\section{Proofs}

\begin{theorem}\label{min}
	Let $ P $ be  a normal $ p $-subgroup of $ G $. Then $ P\leq Z_{\UU}(G) $ if $P$ satisfies the following:
\begin{enumerate}[fullwidth,itemindent=1em,label=\rm{(\arabic*)}]
  		\item Every subgroup  of $ P $ of order $ p $  satisfies the  $ IC $-$ \Pi $-property in $ G $;
  		\item If $ p = 2 $ and $ P $ is non-abelian,  we further suppose that every  cyclic subgroup  of $ P $ of order $ 4 $ satisfies the $ IC $-$ \Pi $-property in $ G $.
  		
  	\end{enumerate}
\end{theorem}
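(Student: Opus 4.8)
The plan is to run an induction on $|G|$, taking a counterexample $(G,P)$ with $|G|+|P|$ minimal. The first step is the standard Thompson-subgroup reduction. Let $C$ be a Thompson critical subgroup of $P$ and $D=\Omega(C)$; both are characteristic in $P$, hence normal in $G$. Every subgroup of $C$ (or of $D$) of order $p$, and every cyclic subgroup of $C$ (or $D$) of order $4$, is such a subgroup of $P$, so the hypotheses pass to $(G,C)$ and $(G,D)$. If $C<P$ or $D<P$, minimality yields $C\le Z_{\UU}(G)$ or $D\le Z_{\UU}(G)$, whence $P\le Z_{\UU}(G)$ by Lemma \ref{then} — a contradiction. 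So $P=C=D=\Omega(C)$, and by Lemma \ref{exp} we may assume $P$ has exponent $p$ when $p$ is odd and exponent dividing $4$ when $p=2$ (exponent $2$ if $P$ is abelian).

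The core of the argument is to show that every minimal normal subgroup $T$ of $G$ with $T\le P$ has order $p$; such a $T$ exists since $P\ne 1$. View $T$ as an $\mathbb{F}_p[G]$-module under conjugation; as $T$ is minimal normal it is a simple module. Suppose $\dim_{\mathbb{F}_p}T=k\ge 2$. Then $T$ is a nontrivial simple module, so its fixed space is $0$. For a subgroup $L=\langle v\rangle\le T$ of order $p$, the commutator $[L,G]$, written additively, is the $\mathbb{F}_p$-span of $\{(g-1)v:g\in G\}$, i.e. $\omega L$ where $\omega$ is the augmentation ideal; since $\omega L$ is a submodule and $\omega L=0$ would force $v$ into the fixed space, we get $[L,G]=T$. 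Hence $L\cap[L,G]=L\cap T=L$, so the hypothesis that $L$ satisfies the $IC$-$\Pi$-property is precisely that $L$ satisfies the $\Pi$-property in $G$; applied to the chief factor $T/1$ this says $|G:N_G(L)|$ is a $p$-number. Thus $G$ acts on the set of lines of $T$ with every orbit of $p$-power length, while the number of lines is $1+p+\dots+p^{k-1}\equiv 1\pmod p$; therefore the number of length-$1$ orbits is $\equiv 1\pmod p$, so some line of $T$ is $G$-invariant, a proper nonzero submodule, contradicting simplicity. Hence $k=1$ and $|T|=p$.

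Now fix a minimal normal $T\le P$, so $|T|=p$, $T\le Z_{\UU}(G)$, and pass to $\overline G=G/T$, $\overline P=P/T$. It suffices to see that $(\overline G,\overline P)$ again satisfies the hypotheses: then by minimality $\overline P\le Z_{\UU}(\overline G)$, and together with $T\le Z_{\UU}(G)$ this gives $P\le Z_{\UU}(G)$, the contradiction. Checking the hypotheses for $\overline G$ is the technical heart and, I expect, the main obstacle, since here coprimality with $T$ fails and Lemma \ref{OVE} does not apply off the shelf. A subgroup of $\overline P$ of order $p$ is $R/T$ with $T\le R\le P$ and $|R|=p^2$, so $R$ is cyclic of order $p^2$ or $R\cong C_p\times C_p$; the cyclic case forces $p=2$ and $P$ non-abelian, and is disposed of by hypothesis (2) applied to $R$ together with Lemma \ref{OVE}\ref{i} (as $T\le R$). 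In the case $R\cong C_p\times C_p$ one must push the $IC$-$\Pi$-property of some order-$p$ subgroup $L\le R$ with $L\ne T$ down to $\overline G$; the plan is to use that $[L,G]\le P\le C_G(T)$ to compute $[L,G]$ and $L^G$, and to show that the only way the transfer could fail for every such $L$ is if $L^{G}=T[L,G]$ with $T\cap[L,G]=1$ for all $p$ of them, which pins $R\cap[L,G]$ down to an order-$p$ subgroup of $R$ different from $T$ and from each $L$ — impossible, since $R$ has only $p+1$ such subgroups. The cyclic-order-$4$ part of the hypothesis is handled by the parallel analysis: the Thompson reduction forces the preimage in $P$ of a cyclic subgroup of $\overline P$ of order $4$ to be $C_4\times C_2$, and then one combines the same transfer argument with Lemma \ref{satifies}. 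Everything outside this descent is routine.
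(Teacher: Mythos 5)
Your Thompson-critical reduction and your orbit-counting argument that every minimal normal subgroup $T$ of $G$ inside $P$ has order $p$ are both correct (the latter is a nice, self-contained observation). The genuine gap is exactly where you anticipate it: the descent to $(G/T,P/T)$, and the sketch you give does not close it. In the case $R=LT\cong C_p\times C_p$, if every line $L\neq T$ of $R$ has $L\cap[L,G]=1$, then the $IC$-$\Pi$-hypothesis on those lines is satisfied vacuously and carries no $\Pi$-property information at all; yet you still need $\bar R\cap[\bar R,\bar G]$ to satisfy the $\Pi$-property in $G/T$, and when $\bar R\le[\bar R,\bar G]$ this means $\bar R$ itself must satisfy it, with no available input. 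Your counting argument does not rescue this: for a fixed $L$ the line $R\cap[L,G]$ only has to differ from $T$ and from that particular $L$, and since it may vary with $L$ there is no conflict with $R$ having just $p+1$ lines (already for $p=2$ one can have $R\cap[L_1,G]=L_2$ and $R\cap[L_2,G]=L_1$). The order-$4$ part is in worse shape: Lemma \ref{satifies} goes the wrong way (it passes the $IC$-$\Pi$-property from $LT$ down to $L$ inside the same group $G$, and says nothing about subgroups of $G/T$), and the transfer computation the paper uses in comparable situations (Step 4 of Theorems \ref{order-d} and \ref{main-result}) works only when the complementing subgroup meets $T$ nontrivially, via $[U_1\cap T,G]=T\le[U_1,G]$; in your setting the relevant subgroup intersects $T$ trivially, so that route is closed. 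So hypothesis (1) (and (2)) for $(G/T,P/T)$ is simply not established.

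The paper's proof of Theorem \ref{min} is structured precisely to avoid this quotient transfer. It inducts downward on a maximal $G$-invariant subgroup $T$ of $P$: the hypotheses restrict to $(G,T)$ trivially (subgroups of $T$ are subgroups of $P$), giving $T\le Z_{\UU}(G)$, uniqueness of $T$, and $|P/T|>p$; the Thompson-critical step (Lemmas \ref{exp} and \ref{then}) bounds the exponent as you do; and then the single top chief factor $P/T$ is handled inside $G$ itself: choose $X/T$ of order $p$ in $P/T\cap Z(G_p/T)$, take $x\in X\setminus T$ and $H=\langle x\rangle$, which by the exponent bound has order $p$ or $4$, so the $IC$-$\Pi$-hypothesis applies directly to $H$ in $G$; both cases $[H,G]<P$ and $[H,G]=P$ force $X\unlhd G$ and hence $|P/T|=p$, a contradiction. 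If you want to keep your bottom-up induction through $G/T$, you must actually prove the non-coprime, complemented transfer statement you need; otherwise the argument should be restructured along the paper's lines.
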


\begin{proof}
	Suppose that the result is false and let $ (G, P) $ be a counterexample for which $ |G|+ |P| $ is minimal. We proceed via the following steps.

	\smallskip
	\begin{enumerate}[fullwidth]
		\renewcommand{\labelenumi}{\textbf{Step \theenumi.}}
		\setcounter{enumi}{0}
		\item\label{min1} $ P $ has a unique  maximal $ G $-invariant subgroup, say $ T $. Moreover, $ T\leq Z_{\UU}(G) $ and $ |P/T|>p $.
	\end{enumerate}
	\smallskip

	

	Let $ T $ be  a maximal $ G $-invariant subgroup of $ P $.  It is easy to see that $ (G, T) $ satisfies the hypotheses. By the choice of $ (G,P) $, we have that $ T\leq Z_{\UU}(G) $. If $ |P/T|=p $, then
	$ P\leq Z_{\UU}(G) $, and so $ P\leq Z_{\UU}(G) $, a contradiction. Hence $ |P/T| > p $.  Now assume that $ K $ is a minimal $ G $-invariant subgroup of $ P $ which is different
	from $ T $.   Then  $ P=TK\leq Z_{\UU}(G) $, a contradiction.


	\smallskip
	\begin{enumerate}[fullwidth]
		\renewcommand{\labelenumi}{\textbf{Step \theenumi.}}
		\setcounter{enumi}{1}
		\item\label{min2} The  exponent  of $ P $ is $ p $ or $ 4 $ (when $ P $ is a non-abelian $ 2 $-group).
	\end{enumerate}
	\smallskip
	
	Let $ C $ be a Thompson critical subgroup of $ P $. If $ \Omega(C) < P $, then $ \Omega(C) \leq T  \leq Z_{\UU}(G) $ by Step \ref{min1}. Applying  Lemma \ref{then}, we see that $ P\leq Z_{\UU}(G) $, a contradiction. Hence $ \Omega(C)=P $. By Lemma \ref{exp}, the  exponent  of $ P $ is $ p $ or $ 4 $ (when $ P $ is a non-abelian $ 2 $-group).
	
	\smallskip
	\begin{enumerate}[fullwidth]
		\renewcommand{\labelenumi}{\textbf{Step \theenumi.}}
		\setcounter{enumi}{2}
		\item\label{min3} The  final contradiction.
	\end{enumerate}
	\smallskip
	

	Let $ G_{p} $ be a Sylow $ p $-subgroup of $ G $. Then  $ P/T\cap Z(G_{p}/T)>1 $.   Let $ X/T $ be a subgroup of $ P/T\cap Z(G_{p}/T) $
	of order $ p $. Then we can choose an element $ x\in X\setminus T $. Set $ H=\langle x \rangle $. Then $ X=HT $ and  $ H $ has order $ p $ or $ 4 $ (when $ P $ is a non-abelian $ 2 $-group) by Step \ref{min2}. By hypothesis, $ H $ satisfies the $ IC $-$ \Pi $-property in $ G $.
	
	Assume that $ [H, G] < P $. Since $ [H, G] \unlhd G $,  it follows from Step \ref{min1} that $ [H, G] \leq T $. Hence $ X=HT=H[H, G]T=H^{G}T\unlhd G $. Since $ H^{G}\nleq T $, it follows from Step \ref{min1} that  $ X=P $. Thus $ |P/T=p| $, a contradiction.  Hence we may assume that $ [H, G] = P $. By hypothesis, $ H $ satisfies the $ IC $-$ \Pi $-property in $ G $. For the $G$-chief factor $P/T$, we see that $ |G:N_{G}(HT\cap P)|=|G:N_{G}(X)| $ is a $ p $-number. Note that $ X\unlhd G_{p} $, we have $ X\unlhd G $. By Step \ref{min1}, we conclude  that $ X=P $. Therefore, $ |P/T|=p $, a contradiction.
\end{proof}

\begin{theorem}\label{max}
	Let $ P $ be  a normal $ p $-subgroup of $ G $. If every maximal
	subgroup of $ P $ satisfies the $ IC $-$ \Pi $-property in $ G $, then $ P\leq Z_{\UU}(G) $.
\end{theorem}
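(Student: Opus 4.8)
Suppose the theorem fails and choose a counterexample $(G,P)$ with $|G|+|P|$ as small as possible; the plan is to whittle $(G,P)$ down by quotient arguments until $P$ is forced to be an elementary abelian minimal normal subgroup of $G$, and then to contradict that directly. The recurring technical point is that the hypothesis transfers to quotients: if $N\trianglelefteq G$ and $N\le M$ with $M$ maximal in $P$, then $M/N$ satisfies the $IC$-$\Pi$-property in $G/N$ by Lemma \ref{OVE}(1). Using this I would first observe that $|P|>p$ (if $|P|=p$ the only $G$-chief factor below $P$ is $P/1$, which is cyclic, so $P\le Z_{\UU}(G)$); and if $\Phi(P)\neq 1$, then every maximal subgroup of $P/\Phi(P)$ has the form $M/\Phi(P)$ with $M$ maximal in $P$, so $(G/\Phi(P),P/\Phi(P))$ satisfies the hypotheses, minimality gives $P/\Phi(P)\le Z_{\UU}(G/\Phi(P))$, and Lemma \ref{Phi} yields $P\le Z_{\UU}(G)$ — a contradiction. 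So $P$ is elementary abelian with $|P|\ge p^2$.

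Next I would pin down a unique minimal normal subgroup. For any minimal normal subgroup $L\le P$ of $G$, the pair $(G/L,P/L)$ satisfies the hypotheses (Lemma \ref{OVE}(1) on those maximal subgroups of $P$ containing $L$), so $P/L\le Z_{\UU}(G/L)$ by minimality. If such an $L$ were cyclic of order $p$, refining $1\lhd L\lhd P$ to a $G$-chief series would show every $G$-chief factor below $P$ is cyclic, whence $P\le Z_{\UU}(G)$; so every minimal normal subgroup of $G$ inside $P$ is non-cyclic. If there were two distinct ones $L_1\neq L_2$, then $L_1L_2/L_1$ would be a minimal normal subgroup of $G/L_1$ isomorphic to $L_2$ lying in $P/L_1\le Z_{\UU}(G/L_1)$, forcing $L_2$ cyclic, a contradiction. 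Hence there is a unique minimal normal subgroup $T\le P$ of $G$, with $|T|\ge p^2$. Now set $d=|P|/p$, so $1<d<|P|$ and the subgroups of $P$ of order $d$ are precisely its maximal subgroups; then Lemma \ref{one-of} applies with $N=P$, and since $T$ is a $p$-group with $T\not\le\Phi(P)=1$, cases (a) and (b) of that lemma would each force $|T|=p$. As $|T|\ge p^2$, this forces $|T|>d=|P|/p$, i.e. $T=P$. Thus $P$ is an elementary abelian minimal normal subgroup of $G$ of rank at least $2$.

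For the final contradiction I would fix $G_p\in\mathrm{Syl}_p(G)$ with $P\le G_p$; since $G_p$ is a $p$-group, $[P,G_p]<P$, so there is a maximal subgroup $M$ of $P$ with $[P,G_p]\le M$, and then $M\trianglelefteq G_p$. By hypothesis $M$ has the $IC$-$\Pi$-property in $G$. Since $P$ is abelian and normal, $[M,G]\le P$; and $M$ is not normal in $G$ (otherwise minimality of $P$ gives $M=1$, impossible as $|M|\ge p$), so $1\neq[M,G]\not\le M$, whence $[M,G]=P$ by minimality of $P$ and therefore $M\cap[M,G]=M$ satisfies the $\Pi$-property in $G$. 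Testing this on the $G$-chief factor $P/1$ shows $|G:N_{G}(M)|$ is a $p$-number; but $G_p\le N_{G}(M)$, so that index is prime to $p$, hence equal to $1$, i.e. $M\trianglelefteq G$ — contradicting the minimality of $P$.

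The main obstacle is the middle stage: turning the hypothesis on maximal subgroups of $P$ into the statement that $P$ is itself minimal normal in $G$. This rests on two things — the uniqueness of the minimal normal subgroup of $G$ inside $P$, obtained by combining the quotient reductions with the fact that such a subgroup cannot be cyclic, and then the application of Lemma \ref{one-of} with $d=|P|/p$, which is where the arithmetic of the $\Pi$-property really does its work. Once $P$ is minimal normal the finish is short, essentially because a $p$-group always stabilises some maximal subgroup of $P$.
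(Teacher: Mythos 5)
Your proof is correct, and it reaches the conclusion by a noticeably different organization than the paper's. The paper first fixes a minimal $G$-invariant subgroup $T\le P$, shows it is unique with $|T|>p$ and $P/T\le Z_{\UU}(G/T)$, then proves $\Phi(P)>1$ by a hands-on computation (writing $P=T\times L$, taking a maximal subgroup $T_1$ of $T$ normal in $G_p$, and testing the $IC$-$\Pi$-property of $H=T_1L$ on the chief factor $T/1$), and only at the very end invokes Lemma \ref{Phi} via $T\le\Phi(P)$. You instead apply Lemma \ref{Phi} at the outset to reduce to $\Phi(P)=1$, i.e.\ $P$ elementary abelian, then use the uniqueness-plus-non-cyclicity argument together with Lemma \ref{one-of} (with $N=P$, $d=|P|/p$, case (a) for $|T|=d$ and case (b) for $|T|<d$, the latter trivially satisfying $T\nleq\Phi(P)=1$) to force $T=P$, so $P$ is minimal normal of rank at least $2$; your finish, choosing a maximal subgroup $M\supseteq[P,G_p]$ with $M\unlhd G_p$, showing $[M,G]=P$, and reading off from the $\Pi$-property on $P/1$ that $|G:N_G(M)|$ is simultaneously a $p$-number and prime to $p$, is in substance the same normalizer trick the paper uses in its Step 2, specialized to $L=1$. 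What the paper's route buys is self-containment (its Step 2 does not need Lemma \ref{one-of}, which the paper reserves for the proof of its main theorem); what your route buys is a cleaner logical shape — all quotient reductions are done first, Lemma \ref{one-of} absorbs the case $|T|\le|P|/p$ in one stroke, and the contradiction is delivered in the sharpest possible configuration, $P$ itself minimal normal. Both arguments rest on the same two standard facts you use correctly: chief factors below $Z_{\UU}$ are cyclic, and a nontrivial normal subgroup of a $p$-group admits a $G_p$-invariant maximal subgroup (equivalently $[P,G_p]<P$).
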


\begin{proof}
	Suppose that the result is false and let $ (G, P) $ be a counterexample for which $ |G|+ |P| $ is minimal.

	\smallskip
	\begin{enumerate}[fullwidth]
		\renewcommand{\labelenumi}{\textbf{Step \theenumi.}}
		\setcounter{enumi}{0}
		\item\label{max1} $ P $ has a unique  minimal $ G $-invariant subgroup, say $ T $. Moreover, $ P/T\leq Z_{\UU}(G/T) $ and $ |T|>p $.
	\end{enumerate}
	\smallskip

	Let $ T $ be  a  minimal $ G $-invariant subgroup of $ P $. By Lemma \ref{OVE}\ref{i}, $ (G/T, P/T) $ satisfies the hypotheses of the theorem.   The minimal choice of $ (G, P) $ implies that $ P/T\leq Z_{\UU}(G/T) $.  If $ |T| = p $, then $ P \leq Z_{\UU}(G) $, a
	contradiction. Hence $ |T| > p $.  Assume that $ P $ has another minimal normal subgroup $ K $. With a similar
argument as above, we can get that  $ P/K \leq Z_{\UU}(G/K) $. It follows that  $ P/K=TK/K \leq Z_{\UU}(G/K) $, and so  $ |T| = p $, a contradiction. This shows that $ T $ is the unique  minimal $ G $-invariant subgroup of $ P $.

	\smallskip
	\begin{enumerate}[fullwidth]
		\renewcommand{\labelenumi}{\textbf{Step \theenumi.}}
		\setcounter{enumi}{1}
		\item\label{max2}  $ \Phi(P)>1 $.
	\end{enumerate}
	\smallskip
	
	Assume that $ \Phi(P)=1 $. Then
	there exists  a subgroup $ L $ of $ P $ such that $ P = T\times L $. Let $ G_{p} $ be a Sylow $ p $-subgroup of $ G $. Then $ P\leq G_{p} $. Let $ T_{1} $ be a maximal subgroup of $ T $ such that  $ T_{1}\unlhd G_{p} $. Clearly, $ \mathrm{Core}_{G}(T_{1})=1 $ and $ T_{1}L $ is a maximal subgroup of $ P $. Set $ H=T_{1}L  $. If $ [H, G]=1 $, then $ T_{1}\unlhd G $, a contradiction. Therefore, $ [H, G]>1 $. Since $ [H, G]\unlhd G $, it follows from Step \ref{max1} that $ T\leq [H, G] $.  By the hypothesis, $ H $ satisfies the $ IC $-$ \Pi $-property in $ G $. For the $G$-chief factor $T/1$, we see that  $ |G:N_{G}(H\cap [H, G]\cap T)|=|G:N_{G}(H\cap T)| $ is a $ p $-number. Observe that $ T_{1}\leq H\cap T\leq T $. Since $ T\nleq  H $, we deduce that $ T_{1}=H\cap T $. Now $ T_{1}\unlhd G_{p} $ imples that $ T_{1}\unlhd G $. By Step \ref{max1}, we have $ |T_{1}|=1 $, and thus $ |T|=p $, a contradiction. Therefore, $ \Phi(P)>1 $.

	\smallskip
	\begin{enumerate}[fullwidth]
		\renewcommand{\labelenumi}{\textbf{Step \theenumi.}}
		\setcounter{enumi}{2}
		\item\label{max3}  The  final contradiction.
	\end{enumerate}
	\smallskip
	
	By Step \ref{max1} and Step \ref{max2}, we have $ T\leq \Phi(P) $, and so $ P/\Phi(P)\leq Z_{\UU}(G/\Phi(P)) $.  Then by
	Lemma \ref{Phi}, we conclude that $ P\leq Z_{\UU}(G) $. This final contradiction completes the proof.
\end{proof}

  \begin{proof}[Proof of Theorem \ref{order-d}]
  		Suppose that the result is false and let $ (G, P) $ be a counterexample for which $ |G|+ |P| $ is minimal.

  		\smallskip
  		\begin{enumerate}[fullwidth]
  			\renewcommand{\labelenumi}{\textbf{Step \theenumi.}}
  			\setcounter{enumi}{0}
  			\item\label{order-d1}  $ p<d<\frac{|P|}{p} $.
  		\end{enumerate}
  		\smallskip
  		
  		 By Theorems \ref{min} and \ref{max}, Step \ref{order-d1} holds.
  		
  		 \smallskip
  		 \begin{enumerate}[fullwidth]
  		 	\renewcommand{\labelenumi}{\textbf{Step \theenumi.}}
  		 	\setcounter{enumi}{1}
  		 	\item\label{order-d2}  $ T<P $.
  		 \end{enumerate}
  		 \smallskip

  		Let $ T $ be  a minimal $ G $-invariant subgroup of $ P $.  Assume that $ T = P $. Let $ H $ be a subgroup of
  		$ P $ of order $ d $.   Since $ H[H, G] = H^{G} \leq P $ and $ [H, G] \unlhd  G $, we have that $ [H,G] = P $ or $ 1 $. If $ [H, G] = 1 $, then $ 1 < H \unlhd G $, a contradiction. Thus $ [H, G]=P $. By the hypothesis, $ H \cap  [H, G] $ satisfies $ \Pi $-property in $ G $. Since $ [H, G]=P=T $, we have that  $ |G:N_{G}(H)| $ is a $ p $-number.  Hence $ H\unlhd G $. The  minimality of $ T $ implies that $ H=1 $, a contradiction. Therefore, $ T < P $.

  		\smallskip
  		\begin{enumerate}[fullwidth]
  			\renewcommand{\labelenumi}{\textbf{Step \theenumi.}}
  			\setcounter{enumi}{2}
  			\item\label{order-d3}  $ |T|<d $.
  		\end{enumerate}
  		\smallskip

  		If $ |T| > d $, then  $ (G, T) $ satisfies the hypotheses, and thus $ T \leq Z_{\UU}(G) $ by the minimal choice of $ (G, P) $. The  minimality of $ T $ yields  that $ d = 1 $, a contradiction.
  		
  		 Suppose that $ |T| =d $. Now Step \ref{order-d1} guarantees that
  		$ |T| > p $ and $ |P : T| =\frac{|P|}{d} > p $. By a similar  argument as above, it is easy to see that $ P/T $ is a chief
  		factor of $ G $.  Let $ A/T $ be a normal subgroup of $ G_{p}/T $ contained in $ P/T $  with  order $ p $. Certainly, $ A < P $
  		and $ A \ntrianglelefteq G $. Since $ T $ is noncyclic, we see that  $ A $ is noncyclic, so that there exists a maximal subgroup $ A_{1} $ of $ A $ such that
  		$ A = A_{1}T $. Obviously
  		$ |A_{1}|=|T|=d $. By the hypothesis, $ A_{1} $ satisfies the $ IC $-$ \Pi $-property in $ G $. Since  $ T\nleq A_{1} $ and $ T $ is a minimal normal subgroup of  $ G $, we know that  $ T\cap \mathrm{Core}_{G}(A_{1}) = 1 $. If $ \mathrm{Core}_{G}(A_{1})>1 $, then   $ A = T\mathrm{Core}_{G}(A_{1}) \unlhd G $ since $ T $ is a maximal subgroup of $ A $, a contradiction. Hence $ \mathrm{Core}_{G}(A_{1})=1 $. Since  $ T \cap A_{1} > 1 $ and $ T $ is a minimal normal subgroup of $ G $, we have  $ 1 < [T\cap A_{1}, G] \leq  [T, G] \leq  T $. Thus  $ T = [T \cap A_{1}, G] \leq  [A_{1}, G] \leq  P $. There holds either $ [A_{1}, G] = T $ or $ [A_{1}, G] = P $. If $ [A_{1}, G] = T $, then
  		$ (A_{1})^{G}= A_{1} [A_{1}, G] = A_{1}T = A $. It follows that $ A\unlhd G $, a contradiction.  So $ [A_{1}, G] = P $. Since $ A_{1} $ satisfies the $ IC $-$ \Pi $-property in $ G $, we see that $ |G:N_{G}(A_{1}\cap T)| $ is a $ p $-number. As $ A_{1}\unlhd G_{p} $, we conclude  that $ A_{1}\cap  T\unlhd G $. The minimality of $ T $ implies  that $ A_{1}\cap T=1 $, and hence $ d=|T|=p $,  contrary to Step \ref{order-d1}.

  		\smallskip
  		\begin{enumerate}[fullwidth]
  			\renewcommand{\labelenumi}{\textbf{Step \theenumi.}}
  			\setcounter{enumi}{3}
  			\item\label{order-d4}  $ P/T\leq Z_{\UU}(G/T) $.
  		\end{enumerate}
  		\smallskip

  		Now $ |T| < d $ by Step \ref{order-d3}. If $ p > 2 $, or $ p = 2 $ and $ d /|T| > 2 $, or $ p = 2 $ and $  P/T $ is an abelian $ 2 $-group, then the hypotheses
  		hold for $ (G/T , P/T) $ by Lemma \ref{OVE}\ref{i}.   The minimal choice of $ (G, P) $ implies that $ P/T\leq  Z_{\UU}(G/T) $, as  desired.


  		Now suppose  that $ d /|T| = 2 $ and $ P/T $ is a non-abelian $ 2 $-group. By hypothesis, every  subgroup of $ P $ of order
  		$ 2|T| $ satisfies the $ IC $-$ \Pi $-property in $ G $.  By Lemma \ref{OVE}\ref{i}, every cyclic subgroup of $ P/T $ of order $ 2 $ satisfies the
  		$ IC $-$ \Pi $-property in $ G/T $.  Let $ U/T $ be any cyclic group of $ P/T $ of order $ 4 $. Assume that $ T\leq \Phi(U) $.   Then $ U $
  		is cyclic and thus $ |T| = 2 $ and $ d = 4 $. Hence any subgroup of $ P $ of order $ 4 $ satisfies the $ IC $-$ \Pi $-property in $ G $. Let $L$ be a subgroup of $P$ of order $2$. We argue that $L$ satisfies the $ IC $-$ \Pi $-property in $ G $. It is no loss to assume that $L\not =T$. Then by Lemma \ref{satifies}, $L$ satisfies the $ IC $-$ \Pi $-property in $ G $, as claimed. By Theorem \ref{min}, it follows that $ P\leq Z_{\UU}(G) $, a contradiction. So $ U $ has a maximal subgroup $ U_{1} $ such that $ U=U_{1}T $. It is easy to see that $ |U_{1}| = 2|T| = d $ and thus $ |U/T|= |U_{1}T/T|=|U_{1}/(U_{1}\cap T)|=4 $. If $ U_{1}\cap T=1 $, then $ d=|U_{1}|=4 $.  By hypothesis, every  subgroup of $ P $ of order $ 4 $ satisfies the $ IC $-$ \Pi $-property in $ G $. With the same argument as above, we can conclude that $P\leq Z_{p\UU}(G)$, a contradiction. Therefore, $ U_{1}\cap T>1 $.  Observe that $ 1 < [U_{1}\cap T, G] \leq [T, G] \leq T $. The minimality of $ T $ implies that
  		 $ T = [T, G] = [U_{1} \cap  T, G] \leq [U_{1}, G] $. By \cite[5.15(ii)]{Robinson-1995}, we have $ U \cap [U, G] = U_{1}T \cap [U_{1}T, G] =U_{1}T\cap [U_{1}, G][T, G] = U_{1}T \cap [U_{1}, T]= (U_{1} \cap [U_{1}, G])T $. Since $U_{1}\cap [U_{1}, G]$ satisfies the $ \Pi $-property in $ G $, it follows from Lemma \ref{over} that $(U_{1} \cap [U_{1}, G])T/T=U/T\cap [U/T,G/T]$ satisfies the $ \Pi $-property in $ G/T $.   This means that every cyclic subgroup of $ P/T $ of order $ 4 $ satisfies the
  		 $ IC $-$ \Pi $-property in $ G/T $. By Theorem \ref{min}, $ P/T\leq Z_{\UU}(G/T) $, as desired.


  		 \smallskip
  		 \begin{enumerate}[fullwidth]
  		 	\renewcommand{\labelenumi}{\textbf{Step \theenumi.}}
  		 	\setcounter{enumi}{4}
  		 	\item\label{order-d5}  The final contradiction.
  		 \end{enumerate}
  		 \smallskip
  		
  		Since $ |T|<d<\frac{|P|}{p} $ and $P/T\leq Z_{p\UU}(G/T)$, there exists a normal
  		subgroup $ K $ of $ G $ such that $ T < K < P $ and $|K| > d $.  Clearly, $ (G, K) $ satisfies the hypotheses and so
  		$ K \leq  Z_{\UU}(G) $ by the minimal choice of $ (G, P) $.  This yields  that $ |T| = p $,  and so $ P \leq  Z_{\UU}(G) $ by Step \ref{order-d4}, a contradiction. The proof is  now  complete.
  \end{proof}

\begin{proof}[Proof of Theorem \ref{minimal}]
	Assume that this theorem is not true and let $ (G , N) $ be a minimal counterexample for which $ |G|+|N| $ is minimal.

	\smallskip
	\begin{enumerate}[fullwidth]
		\renewcommand{\labelenumi}{\textbf{Step \theenumi.}}
		\setcounter{enumi}{0}
		\item\label{minimal1}  $ O_{p'}(N)=1 $.
	\end{enumerate}
	\smallskip

	By Lemma \ref{OVE}\ref{ii}, the hypotheses hold for $ (G/O_{p'}(N), N/O_{p'}(N)) $. The minimal choice of $ ( G , N ) $ implies that $ O_{p'}(N) = 1 $.
	
	\smallskip
	\begin{enumerate}[fullwidth]
		\renewcommand{\labelenumi}{\textbf{Step \theenumi.}}
		\setcounter{enumi}{1}
		\item\label{minimal2}  $ N $ is not $ p $-soluble.
	\end{enumerate}
	\smallskip

	Assume that $ N $ is $ p $-soluble. Since $ O_{p'}(N)=1 $, it follows from \cite[Lemma 2.10]{Adolfo-2009} that $ F^{*}_{p}(N) = F_{p}(N) =
	O_{p}(N) $ .  By Theorem \ref{min}, we have $ F^{*}_{p}(N) =O_{p}(N) \leq Z_{\UU}(G) \leq Z_{p\UU}(G) $. Hence $ N\leq Z_{p\UU}(G) $ by \cite[Lemma 2.13]{Su-2014}, a contradiction.

	\smallskip
	\begin{enumerate}[fullwidth]
		\renewcommand{\labelenumi}{\textbf{Step \theenumi.}}
		\setcounter{enumi}{2}
		\item\label{minimal3}  $ N $ has a unique maximal $ G $-invariant subgroup $ U $. Moreover, $ 1<U\leq Z_{p\UU}(G) $.
	\end{enumerate}
	\smallskip

	Assume that  $ N $ is a minimal normal subgroup of $ G $. Let $ G_{p} $ be a Sylow $ p $-subgroup of $ G $ containing $ P $. Then $ G_{p}\cap N =P $.  Let  $ H $ be a cyclic subgroup of $ P $ of order $ p $ such that $ H\unlhd G_{p} $.  If $ [H, G]=1 $, then the minimality of $ N $ implies that $ |H|=|N|=p $, and thus $ N\leq Z_{\UU}(G) $, a contradiction.  Therefore,  $ [H, G]= N $. By hypothesis, $ H $ satisfies the  $ IC $-$ \Pi $-property in $ G $. For the  $G$-chief factor $N/1$, we see that $ |G:N_{G}(H\cap N)|=|G:N_{G}(H)| $ is a $ p $-number. As a consequence, $ H\unlhd G $. The minimality of $ N $ yields that $ N=H $, and so $ N\leq Z_{\UU}(G) $, a contradiction.  Hence $ N $ has a maximal $ G $-invariant subgroup $ U\not =1 $. It is easy to see  that the hypotheses also hold for $ ( G , U ) $ and the minimal choice of $ ( G , N) $ yields that $ U\leq  Z_{p\UU}(G) $. If $ R $ is a maximal $ G $-invariant subgroup of $ N $ which is different from $ U $, then  $ R\leq Z_{p\UU}(G) $, and thus $ N=UR\leq Z_{p\UU}(G) $, a contradiction. Consequently, $ U $ is the unique maximal $ G $-invariant subgroup of $ N $.

	\smallskip
	\begin{enumerate}[fullwidth]
		\renewcommand{\labelenumi}{\textbf{Step \theenumi.}}
		\setcounter{enumi}{3}
		\item\label{minimal4}  Every cyclic subgroup of $ P $ of order $ p $ or $ 4 $ (when $ P $ is a non-abelian $ 2 $-group)  is contained in $ U $.
	\end{enumerate}
	\smallskip

	Assume that there exists a cyclic subgroup $ H $ of $ P $ of order $ p $ or $ 4 $ (when $ P $ is not abelian)  such that $ H\nleq U $. By Step \ref{minimal3}, $ H[H, G]U=H^{G}U=N $.   Assume that $ [H, G]< N $.  The uniqueness of $ U $ yields  that $[H, G]\leq U$, and so $ HU=N $. By Step \ref{minimal3}, $ N $ is $ p $-soluble, which contradicts Step \ref{minimal2}. Hence $ [H, G]=N $.
	By hypothesis, $ H $ satisfies the $ IC $-$ \Pi $-property in $ G $. For the $ G $-chief factor $ N/U $, we see that $ |G:N_{G}(HU)| $ is a $ p $-number. Thus $ G=N_{G}(HU)P $. By Step \ref{minimal3}, we see that $ N=(HU)^{G}=H^{P}U $. By Step \ref{minimal3}, it follows that $ N $ is $ p $-soluble, which contradicts Step \ref{minimal2}. Therefore, Step \ref{minimal4} holds.
	
	\smallskip
	\begin{enumerate}[fullwidth]
		\renewcommand{\labelenumi}{\textbf{Step \theenumi.}}
		\setcounter{enumi}{4}
		\item\label{minimal5}  The final contradiction.
	\end{enumerate}
	\smallskip

	Since $ U\leq Z_{p\UU}(G) $, it follows from Step \ref{minimal4} and \cite[Lemma 2.12]{Su-2014} that $ N $ is $ p $-supersoluble. In particular, $ N $ is $ p $-soluble,  contrary to Step \ref{minimal2}.  This  final contradiction completes the proof.
\end{proof}

\begin{proof}[Proof of Theorem \ref{maximal}]
	Assume that this theorem is not true and let $ (G, N) $ be a counterexample for which $ |G|+|N| $ is minimal. Then $ |P|\geq p^{2} $ and $ N\nleq Z_{p\UU}(G) $. Let $ G_{p} $ be a Sylow $ p $-subgroup of $ G $ containing $ P $. Then $ G_{p}\cap N=P $.   We will obtain  a contradiction in several steps.
	
	\smallskip
	\begin{enumerate}[fullwidth]
		\renewcommand{\labelenumi}{\textbf{Step \theenumi.}}
		\setcounter{enumi}{0}
		\item\label{maximal1}  $ O_{p'}(N)=1 $.
	\end{enumerate}
	\smallskip

		By Lemma \ref{OVE}\ref{ii}, the hypotheses hold for $ (G/O_{p'}(N), N/O_{p'}(N)) $. The minimal choice of $ ( G , N ) $ implies that $ O_{p'}(N) = 1 $.
		
	
	\smallskip
	\begin{enumerate}[fullwidth]
		\renewcommand{\labelenumi}{\textbf{Step \theenumi.}}
		\setcounter{enumi}{1}
		\item\label{maximal2}  $ |P|\geq p^{3} $.
	\end{enumerate}
	\smallskip

	If $ |P|=p^{2} $,  then every subgroup of $ P $  of order $ p $ satisfies the  $ IC $-$ \Pi $-property in $ G $. By Theorem \ref{minimal}, we conclude that $ N\leq Z_{p\UU}(G) $, a contradiction. Hence Step \ref{maximal2} holds.

	\smallskip
	\begin{enumerate}[fullwidth]
		\renewcommand{\labelenumi}{\textbf{Step \theenumi.}}
		\setcounter{enumi}{2}
		\item\label{maximal3}  Let $ T $ be a minimal subgroup of $ G $ contained in $ N $. Then   either $ |PT/T|_{p} = p $ or $ N/T \leq  Z_{p\UU}(G/T) $.
	\end{enumerate}
	\smallskip
	
	By Step \ref{maximal1}, we have $ p\big | |T| $. Clearly, $ PT/T $ is a Sylow
	$ p $-subgroup of $ N/T $.  Let $ L/T $ be a maximal subgroup of $ PT/T $. Then $ L = L \cap PT = (L \cap P)T $. Set
	$ L \cap P = P_{1}  $. Then $ |P : P_{1}| = |PT/T : L/T| = p $. Therefore, $ P_{1} $ is a maximal subgroup of  $ P $. Since $ P_{1} \cap T = P \cap T $ is a Sylow $ p $-subgroup of $ T $, it follows that
    \begin{align*}
	|T \cap P_{1}[P_{1}, G]|_{p} = |T|_{p} = |T \cap P_{1}| = |(T \cap P_{1})(T \cap [P_{1}, G])|_{p}
	\end{align*} and \begin{align*}
	|T\cap P_{1}[P_{1}, G]|_{p'} &=\frac{|P_{1}[P_{1}, G]|_{p'}|T|_{p'}}{|TP_{1}[P_{1}, G]|_{p'}} \\
	&= \frac{|[P_{1}, G]|_{p'}|T|_{p'}}{|T[P_{1}, G]|_{p'}} \\
	&= |T\cap [P_{1}, G]|_{p'}\\
	&= |(T \cap P_{1})(T \cap [P_{1}, G])|_{p'}.
	\end{align*}
	Therefore, $ T\cap P_{1}[P_{1}, G] = (T \cap P_{1})(T\cap [P_{1}, G]) $. By Lemma \ref{equivalent}, we have $ P_{1}T \cap [P_{1}, G]T =
	(P_{1} \cap [P_{1}, G])T $.  Thus
	\begin{align*}
	L/T \cap [L/T, G/T] &=P_{1}T/T \cap [P_{1}T/T, G/T]  \\
	&= (P_{1}T \cap [P_{1}, G]T)/T \\
	&= (P_{1} \cap [P_{1}, G])T/T.
	\end{align*}
	By Lemma \ref{over}, $L/T \cap [L/T, G/T]=(P_{1} \cap [P_{1}, G])T/T$  satisfies the  $ \Pi $-property in $ G/T $. This means that $ L/T $ satisfies the $ IC $-$ \Pi $-property in $ G/T $. Therefore $ (G/T, N/T) $ satisfies the hypotheses of the theorem. The minimal choice $ (G, N) $ yields that $ |PT/T|=p $ or $ N/T\leq Z_{p\UU}(G/T) $.

 	\smallskip
 	\begin{enumerate}[fullwidth]
 		\renewcommand{\labelenumi}{\textbf{Step \theenumi.}}
 		\setcounter{enumi}{3}
 		\item\label{maximal4} $ T $ is the unique minimal normal subgroup of $ G $ contained in $ N $.
 	\end{enumerate}
 	\smallskip

	Let $ T_{1} $ and $ T_{2} $ be two different minimal normal subgroups of $ G $ contained in $ N $.   Then $ N/T_{i}\leq  Z_{p\UU}(G/T_{i}) $ or $ |PT_{i}/T_{i}|=p $ for $ i=1, 2 $ by Step \ref{maximal3}. In view of Step \ref{maximal1}, we have $ p\big | |T_{i}| $ for $ i=1, 2 $.  First suppose
	that $ N/T_{1}\leq Z_{p\UU}(G/T_{1}) $ and $ N/T_{2}\leq Z_{p\UU}(G/T_{2}) $. Then $ N $ is $ p $-supersoluble. By Lemma \ref{Sylow}, we have $ P\unlhd N $, and so $ P\unlhd G $. Applying Theorem \ref{max}, we deduce that  $ P\leq  Z_{\UU}(G) $, and  thus $ N\leq  Z_{p\UU}(G) $, a contradiction. Secondly,
	without loss of generality, we may suppose that $ N/T_{1}\leq  Z_{p\UU}(G/T_{1}) $ and  $ |PT_{2}/T_{2}|=p $. Observe  that $ T_{2}T_{1}/T_{1} $ is  a minimal normal subgroup of $ G/T_{1} $ contained  in $ N/T_{1} $ and $ p\big | |T_{2}| $,  we have $ |T_{2}|=|T_{2}T_{1}/T_{1}|=p $, and so $ |P|=p^{2} $,  contrary to Step \ref{maximal2}. Lastly suppose
	that $ |PT_{1}/T_{1}|=p $ and $ |PT_{2}/T_{2}|=p $.  Note that  $ (P\cap  T_{1})\cap (P\cap T_{2})=1 $ and  $ |P/(P\cap T_{i})|=p $ for $ i=1, 2 $. Hence $ |P\cap T_{1}|=|P\cap  T_{2}|=p $ and $ |P|=p^{2} $,  contrary to Step \ref{maximal2}.
	
	\smallskip
	\begin{enumerate}[fullwidth]
		\renewcommand{\labelenumi}{\textbf{Step \theenumi.}}
		\setcounter{enumi}{4}
		\item\label{maximal5} $ T\leq O_{p}(N) $.
	\end{enumerate}
	\smallskip


    Assume that $ T\nleq O_{p}(N) $.  Then $ O_{p}(N)=1 $ by Step \ref{maximal4}. Let $ H $ be a maximal subgroup of $ P $ such that $ H\unlhd G_{p} $.
    If $ [H, G]=1 $, then $ H\unlhd G $, and so  $ O_{p}(N)>1 $, a contradiction.  Hence we  can assume that $ [H, G]>1 $. By Step \ref{maximal4}, we  have $ T\leq  [H, G] $. By  hypothesis, $ H $ satisfies the  $ IC $-$ \Pi $-property in $ G $. For the $ G $-chief factor $ T/1 $, we see that  $ |G:N_{G}(H\cap T)| $ is  a $ p $-number. As a consequence,  $ H\cap T\unlhd G $. Hence $ H\cap T\leq O_{p}(N)= 1 $, and so $ |P\cap T|=|G_{p}\cap T|=p $. Note that $ |P|\geq  p^{3} $, we can choose a maximal subgroup  $ L $ of $ P $ such that $ P\cap T\leq L\unlhd G_{p} $.  Clearly, $ [L, G]>1 $. By Step \ref{maximal4}, we know $ T\leq [L, G] $. By hypothesis, $ L $ satisfies the $ IC $-$ \Pi $-property in $ G $.
    Arguing as above, we can deduce  that $ L\cap T\leq O_{p}(G)=1 $,
    and thereby $ P \cap T=L\cap T = 1 $, contrary to Step \ref{maximal1}. Thus Step \ref{maximal4} follows.

    \smallskip
    \begin{enumerate}[fullwidth]
    	\renewcommand{\labelenumi}{\textbf{Step \theenumi.}}
    	\setcounter{enumi}{5}
    	\item\label{maximal6} $ T\nleq \Phi(P) $.
    \end{enumerate}
    \smallskip

	Assume that $ T\leq \Phi(P) $. By Step \ref{maximal4}, either $ |P/T| = p $ or $ N/T\leq  Z_{p\UU}(G/N) $. If $ |P/T| = p $, then $ P $ is  cyclic of order $ p^{2} $, which contradicts Step \ref{maximal2}.   Therefore $ N/T\leq  Z_{p\UU}(G/N) $. Since $ T\leq  \Phi(P) $, we have $ T\leq  \Phi(N) $. It follows from \cite[Lemma 2.10]{Su-2014} that $ N \leq Z_{p\UU}(G) $, a contradiction.

	\smallskip
	\begin{enumerate}[fullwidth]
		\renewcommand{\labelenumi}{\textbf{Step \theenumi.}}
		\setcounter{enumi}{6}
		\item\label{maximal7} The final contradiction.
	\end{enumerate}
	\smallskip
	
	
	Since $ T\nleq \Phi(P) $, we see  $ T\cap \Phi(P)<T $. As $ T\cap \Phi(P)\unlhd G_{p} $, $ T $ has a maximal subgroup $ T_{1} $ such that $ T\cap \Phi(P)\leq  T_{1}\unlhd G_{p} $. Set $ \overline{P}=P/T_{1}\Phi(P) $.
	Note that $ \overline{T}=T\Phi(P)/T_{1}\Phi(P) $ has  order $ p $ and   $ \overline{P} $ is elementary abelian. There exists a complement $ \overline{U}= U/T_{1}\Phi(P) $ for $ \overline{T} $ in $ \overline{P} $. Clearly, $ U $ is a maximal subgroup of $ P $.  Since  $ T_{1}\leq U\cap T<T $, it follows that $ U\cap T=T_{1}\unlhd G_{p} $. We argue that $ T $ has order $ p $.
	If $ [U, G]=1 $, then $ U\leq Z(G) $, and so  $ T_{1}=U\cap T\unlhd G $. The minimality of $T$ implies that  $ |T_{1}|=1 $, and so $ |T|=p $, as claimed. If $ [U, G]>1 $, then $ T\leq [U, G] $ by Step \ref{maximal4}. By hypothesis, $ U $  satisfies the  $ IC $-$ \Pi $-property in $ G $. For the $ G $-chief factor $ T/1 $, we see that $ |G:N_{G}(U\cap T)| $ is a $ p $-number. Since $ U\cap T=T_{1}\unlhd G_{p} $, we have $ T_{1}\unlhd  G $. The minimality of $ T $ yields  that $ |T|=p $, as  claimed.
	By Step \ref{maximal3}, there holds either $ |P/T|=p $ or $ N/T\leq Z_{p\UU}(G/T) $. If $ |P/T|=p $, then $ |P|=p^{2} $, which contradicts Step \ref{maximal2}.  If $ N/T\leq Z_{p\UU}(G/T) $, then $ N\leq Z_{p\UU}(G) $, a  contradiction. Our proof  is now complete.
\end{proof}

We can now prove our main result.

\begin{proof}[Proof of Theorem \ref{main-result}]
	Assume that this result is not true and let $ (G, N) $ be a counterexample such that $ | G |+|N| $ is minimal. Let $ G_{p} $ be a Sylow  $ p $-subgroup of $ G $ such that $ P\leq G_{p} $. Then $ G_{p}\cap X=P $.

	\smallskip
	\begin{enumerate}[fullwidth]
		\renewcommand{\labelenumi}{\textbf{Step \theenumi.}}
		\setcounter{enumi}{0}
		\item\label{main-result1} $ X=N $ and $ O_{p'}(N)=1 $.
	\end{enumerate}
	\smallskip

	Suppose that $ X < N $. Then consider the pair $ (G, X) $. Clearly, $ F_{p}^{*}(X)\leq X\leq X  $.  It is easy to see that $ (G, X) $ satisfies the hypotheses. By the minimal choice of $ (G, N) $, we have
	$ X\leq  Z_{p\UU}(G)  $. Hence $ F_{p}^{*}(N)\le  Z_{p\UU}(G) $. By \cite[Lemma 2.13]{Su-2014}, we have $ N\leq Z_{p\UU}(G) $, a contradiction. Thus $ X=N $.
	
	By Lemma \ref{OVE}\ref{ii}, we know that $ (G/O_{p'}(N), N/O_{p'}(N)) $ satisfies the hypotheses of the theorem, hence
	the minimal choice of $ ( G , N ) $ yields  that $ O_{p'}(N) = 1 $.



	

	
	

	 \smallskip
	 \begin{enumerate}[fullwidth]
	 	\renewcommand{\labelenumi}{\textbf{Step \theenumi.}}
	 	\setcounter{enumi}{1}
	 	\item\label{main-result2} $ p<d<\frac{|P|}{p} $.
	 \end{enumerate}
	 \smallskip
	
	 By Theorems \ref{minimal} and \ref{maximal}, Step \ref{main-result2} holds.

	 \smallskip
	 \begin{enumerate}[fullwidth]
	 	\renewcommand{\labelenumi}{\textbf{Step \theenumi.}}
	 	\setcounter{enumi}{2}
	 	\item\label{main-result3} Let $ T $ be an arbitrary minimal normal subgroup of $ G $ contained in $ N $. Then $ T\leq O_{p}(N) $.
	 \end{enumerate}
	 \smallskip

	 Assume that $ |P\cap T|<d $. Then there exists a subgroup $ H $ of $ P $ of order $ d $ such that $ 1<P\cap T< H\unlhd G_{p} $. By Step \ref{main-result1}, we  have $ 1<P\cap T=H\cap T $. 	
	 If $ [P\cap T, G]=1 $, then $ P\cap T\leq Z(G) $.  The minimality of $ T $ yields  that $ P\cap T=T $, and so $ T\leq O_{p}(N) $, as wanted. Hence we  may suppose that $ [P\cap T, G]>1 $. Clearly, $ 1<[H\cap T, G]=[P\cap T, G]\leq T $. Hence $ 1<[H\cap T, G]  \leq T\cap [H, G]\leq T $. The minimality of $ T $ implies that $ T\cap [H, G]=T $. By hypothesis, $ H $ satisfies the  $ IC $-$ \Pi $-property in $ G $. For the $ G $-chief factor $T/1$, $ |G:N_{G}(H\cap T)| $ is  a $ p $-number. Since $ H\unlhd G_{p} $, we have $ 1<H\cap T\unlhd G $.  As a consequence,  $ H\cap T=T $, and so $ T\leq O_{p}(N) $, as wanted.
	
	 Assume that $ |P\cap T|\geq d $. Let $ L $ be a subgroup of $ P $ of  order $ d $ such that $ L\leq P\cap T $. If $ [L, G]=1 $, then $L\leq Z(G)$. The minimality of $T$ forces  that $p=|L|=|T|=d$, contrary to Step \ref{main-result2}. Hence we can suppose  that $ 1<[L, G] $. The minimality of $ T $ yields  that $ [L, G]=T $. By hypothesis, $ H $ satisfies the  $ IC $-$ \Pi $-property in $ G $. Thus $ |G:N_{G}(H\cap T)|=|G:N_{G}(H)| $ is  a $ p $-number. As a consequence, $ H\unlhd G $. Therefore $ T=H\leq O_{p}(N) $, as wanted.

	
	\smallskip
	\begin{enumerate}[fullwidth]
		\renewcommand{\labelenumi}{\textbf{Step \theenumi.}}
		\setcounter{enumi}{3}
		\item\label{main-result4} $ |T|<d $, $ N/T\leq Z_{p\UU}(G/T) $ and $ T $ is the unique minimal normal subgroup of $ G $ contained in $ N $.
	\end{enumerate}
	\smallskip

    By Step \ref{main-result3}, we know that $T$ is a $p$-group. If $ |T|>d $, then $ T\leq Z_{\UU}(G) $ by Theorem \ref{order-d}, and thus $ |T|=p $. As a consequence, $ d=1 $, a contradiction. Now  assume that $ |T|=d $. Then $ |O_{p}(N)| \geq d $. If $ |O_{p}(N)| > d $, then by Theorem \ref{order-d}, we have
	 $ O_{p}(N) \leq Z_{\UU}(G) $ and so $ d = |T| = p $, contrary to Step \ref{main-result2}. Thus  $ |O_{p}(N)| = d $. By Step \ref{main-result3}, we see that $ T=O_{p}(N) $ is the unique minimal normal subgroup of $ G $ contained in $ N $. By Lemma \ref{one-of}\ref{one}, it follows that $ |T|=p $, which contradicts Step \ref{main-result2}. Thus $ |T|<d $.
	
	 If $ p > 2 $, or $ p = 2 $ and $ d /|T| > 2 $, or $ p = 2 $ and $ P/T $ is an abelian $ 2 $-group, then the hypotheses
	 hold for $ ( G/T, N/T) $. Hence $ N/T \leq Z_{p\UU}(G/T) $ by the minimal choice of $ (G, N) $. Now assume that $d/|T| = 2$ and $P/T$ is a non-abelian $2$-group.  Then every subgroups of $ P $ of order $ 2|T| $ satisfies the  $ IC $-$ \Pi $-property in $ G $. By Lemma \ref{OVE}\ref{i},  every cyclic subgroup of $ P/T $ of order $ 2 $ satisfies the $ IC $-$ \Pi $-property in $ G/T $.  Let $ A/T $ be any cyclic group of $ P/T $ of order $ 4 $.  Assume that $ T\leq  \Phi(A)  $. Then $ A $ is cyclic and thus $ |T| = 2 $ and $ d = 4 $. Then every subgroup of $ P $ of order $ 4 $ satisfies the $ IC $-$ \Pi $-property in $ G $. Let $L$ be a subgroup of $P$ of order $2$. We argue that $L$ satisfies the $ IC $-$ \Pi $-property in $ G $. It is no loss to assume that $L\not =T$. Then by Lemma \ref{satifies}, $L$ satisfies the $ IC $-$ \Pi $-property in $ G $, as claimed. Applying Theorem \ref{minimal}, it follows that $ N\leq Z_{p\UU}(G) $, a contradiction. Hence $ T\nleq \Phi(A) $ and $ A $ has a maximal subgroup of $ R $ such that $ TR=A $. Clearly,  $ |R| = 2|T| = d $ and thus $ |A/T| = |TR/T| = |R/(R\cap T)| = 4 $. If $ R\cap T=1 $, then every subgroup of $ P $ of order $ 4 $ satisfies the $ IC $-$ \Pi $-property in $ G $. Using the same argument as above, every subgroup of order $ 2 $ satisfies the $ IC $-$ \Pi $-property in $ G $.  By Theorem \ref{minimal}, $ N\leq  Z_{p\UU}(G) $, a contradiction.
	 Therefore, $ R\cap T>1 $.  The minimality of $ T $ implies that  $ 1 < [R\cap T, G] \leq [T, G] \leq T $. Hence
	 $ T = [T, G] = [R \cap  T, G] \leq [R, G] $. By \cite[5.15(ii)]{Robinson-1995}, we have $ A\cap [A, G] = RT \cap [RT, G] =RT\cap [R, G][T, G] = RT \cap [R, G]= (R \cap [R, G])T $. By  Lemma \ref{over}, we know that $ A/T $ satisfies the $ IC $-$ \Pi $-property in $ G/T $.  This means that every cyclic subgroup of $ P/T $ of order $ 4 $ satisfies the
	 $ IC $-$ \Pi $-property in $ G/T $. By Theorem \ref{minimal}, $ N/T\leq Z_{\UU}(G/T) $, as desired.
	
	 Assume that $ K $ is another minimal normal subgroup of $ G $. Then $ K\leq O_{p}(N) $  by Step \ref{main-result3}. Using the same argument as above, we have $|K|<d$, $ N/K\leq Z_{p\UU}(G/K) $. Since $ N/T\leq Z_{p\UU}(G/T) $, it follows that  $ N $ is $ p $-supersoluble. By Lemma \ref{Sylow}, we see that $ P\unlhd N $, and thus $ P\unlhd G $. Applying Theorem \ref{order-d}, we can deduce that $ N\leq Z_{p\UU}(G) $, a contradiction. Consequently, $ T $ is the unique minimal normal subgroup of $ G $ contained in $ N $.

	 \smallskip
	 \begin{enumerate}[fullwidth]
	 	\renewcommand{\labelenumi}{\textbf{Step \theenumi.}}
	 	\setcounter{enumi}{4}
	 	\item\label{main-result5} The final contradiction.
	 \end{enumerate}
	 \smallskip
	
	 If  $ T\leq \Phi(P) $, then  $ T\leq \Phi(N) $. Since $ N/T\leq  Z_{p\UU}(G/T)  $, it follows from \cite[Lemma 2.10]{Su-2014} that $ N\leq Z_{p\UU}(G) $, a contradiction. Hence $ T\nleq \Phi(P) $. By Lemma \ref{one-of}\ref{two}, Step \ref{main-result3} and Step \ref{main-result4}, we have $ |T|=p $. Since $ N/T\leq Z_{p\UU}(G/T) $, it follows that  $ N\leq Z_{p\UU}(G) $, and this is our final contradiction.
	\end{proof}

\section{Final remarks and applications}\label{S4}

In this section, we will show that the concept of the $ IC $-$ \Pi $-property can be viewed as a generalization  of many known embedding properties. As a consequence, a large number of known results can follow directly from our main result.

Recall that two subgroups $ H $ and $ K $ of  $ G $ are said to be \emph{permutable}  if $ HK = K H $.
A subgroup $ H $ of $ G $ is said to be \emph{$ S $-permutable}  (or \emph{$\pi$-quasinormal}, \emph{$ S $-quasinormal}) in $ G $ if $ H $ permutes with all Sylow subgroups of $ G $  \cite{Kegel}. A subgroup $ H $ of  $ G $  is said to be \emph{$ X $-permutable}  with a subgroup $ T $ of $ G $  if there is an element $ x \in X $ such that $ HT^{x} = T^{x}H $, where  $ X $ is a   non-empty subset of $ G $ \cite{Guo-2007}.
A subgroup $ H $ of  $ G $ is called a \emph{CAP-subgroup} of $ G $ if $ H $ either covers or avoids every chief factor $ L/K $ of $ G $, that is, $ HL =HK $ or $ H \cap L =H \cap K $ (see \cite[Chapter A, Definition 10.8]{Doerk}).

A subgroup $ H $ of  $ G $ is said to be \emph{$ S $-semipermutable}  \cite{Chen-1987} in $ G $  if $ HG_{p} = G_{p}H $ for any Sylow $ p $-subgroup $ G_{p} $ of $ G $ with $ (p,|H|) = 1 $. A subgroup $ H $ of $ G $ is  said to be \emph{$ SS $-quasinormal}  \cite{Li-2008} in  $ G $ if there is a subgroup  $ B $ of $ G $  such that $ G=HB $ and  $ H $ permutes with every Sylow subgroup of $ B $.

\begin{lemma}\label{p1}
	Let $ H $ be a $ p $-subgroup of  $ G $. Then $ H $  satisfies the $ IC $-$ \Pi $-property in $ G $, if one of the following holds:

	\begin{enumerate}[fullwidth,itemindent=1em,label=\rm{(\arabic*)}]
		\item\label{1} $ H $ is normal in $ G $;
		\item $ H $ is permutable in $ G $;
		\item $ H $ is $ S $-permutable in $ G $;
		\item $ H $ is $ X $-permutable with all Sylow subgroups of $ G $, where $ X $ is a soluble normal subgroup of $ G $;
		\item $ H $ is a CAP-subgroup of $ G $;
		\item\label{6} $ H/H_{G}  \leq Z_{\UU}(G /H_{G}) $;
		
		\item\label{7} $ H $ is a $ p $-group and $ H $ is $ S $-semipermutable in $ G $;
		\item\label{8} $ H $ is a $ p $-group and $ H $ is $ SS $-quasinormal in $ G $.
		
	\end{enumerate}	
\end{lemma}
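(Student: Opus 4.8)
The plan is to reduce every item to the implication, recorded in \cite[Lemma 2.1(6)]{Li-Miao}, that a $p$-subgroup satisfying the $\Pi$-property in $G$ satisfies the $IC$-$\Pi$-property in $G$. Thus for (1)--(5) and (7)--(8) it suffices to prove that $H$ itself satisfies the $\Pi$-property in $G$, while for (6) I would insert one reduction step modulo $H_{G}:=\Core_{G}(H)$.

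Items (1) and (5) I would dispatch straight from the definition. If $H\unlhd G$, then for every chief factor $L/K$ of $G$ the group $HK/K\cap L/K$ is normal in $G/K$, so its normalizer index is $1$. If $H$ is a CAP-subgroup, then for every chief factor $L/K$ one has $HL=HK$ or $H\cap L=H\cap K$, and by Dedekind's modular law this forces $HK/K\cap L/K$ to equal $L/K$ or $1$ respectively, which is normal in $G/K$ in either case. Hence $H$ satisfies the $\Pi$-property.

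For (2), (3), (4), (7), (8) I would appeal to the literature. A permutable subgroup is $S$-permutable, and $S$-permutable subgroups, subgroups $X$-permutable with all Sylow subgroups of $G$ for a soluble normal subgroup $X$, and---when $H$ is a $p$-group---$S$-semipermutable and $SS$-quasinormal subgroups are all among the embedding properties already known to imply the $\Pi$-property (see \cite[Section 2]{li-2011}; see also \cite{Guo-2007} for $X$-permutability and \cite{Su-2014} for the $S$-semipermutable and $SS$-quasinormal cases). Here I would make sure that the hypotheses stated above, in particular the $p$-subgroup assumption in (7) and (8), are exactly those used in those sources.

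The only item needing a genuine argument is (6). Writing $\overline{G}=G/H_{G}$ and $\overline{H}=H/H_{G}$, the hypothesis says $\overline{H}\le Z_{\UU}(\overline{G})\le Z_{p\UU}(\overline{G})$, and $\overline{H}$ is a $p$-group; so Lemma \ref{necessity} applied to $\overline{G}$ gives that $\overline{H}$ satisfies the $\Pi$-property in $\overline{G}$ (if $p\nmid|\overline{G}|$ then $\overline{H}=1$ and $H=H_{G}\unlhd G$, which is case (1)). It then remains to transfer the $\Pi$-property back to $G$ through the normal subgroup $H_{G}\le H$, and this converse of Lemma \ref{over}, under the extra assumption that the normal subgroup lies inside the given subgroup, is the step I expect to be the main---though still routine---obstacle. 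I would prove the statement ``if $N\unlhd G$, $N\le B\le G$ and $B/N$ satisfies the $\Pi$-property in $G/N$, then $B$ satisfies the $\Pi$-property in $G$'' by a chief-factor analysis: for a chief factor $L/K$ of $G$ with $N\le K$ the conclusion is read off in $G/N$; when $N\nleq K$, the normal subgroup $NK/K$ of $G/K$ meets $L/K$ either in $L/K$---whence $BK\supseteq NK\supseteq L$ and $BK/K\cap L/K=L/K$ is normal in $G/K$---or trivially, and in the latter case $N\cap L\le K$ gives, by the modular law, a $G$-isomorphism $L/K\cong LN/KN$ (with $LN/KN$ a chief factor of $G/N$) carrying $BK/K\cap L/K$ to $(B/N)(KN/N)/(KN/N)\cap(LN/N)/(KN/N)$ with the same normalizer index, so the hypothesis in $G/N$ applies. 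Taking $N=H_{G}$ and $B=H$ then shows that $H$ satisfies the $\Pi$-property in $G$, and \cite[Lemma 2.1(6)]{Li-Miao} completes case (6) and the proof.
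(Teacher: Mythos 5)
Your proposal is correct, and at the top level it is the same strategy as the paper: every case is reduced to showing that the relevant subgroup satisfies the $\Pi$-property and then invoking the implication (for $p$-subgroups) from the $\Pi$-property to the $IC$-$\Pi$-property via \cite[Lemma 2.1(6)]{Li-Miao}. The difference is in how the individual cases are discharged. The paper's proof is purely a citation: cases (1)--(6) are quoted from \cite[Propositions 2.2--2.3]{li-2011}, case (7) by noting that the proof of \cite[Proposition 2.4]{li-2011} carries over, and case (8) by first using \cite[Lemma 2.5]{Li-2008} to see that an $SS$-quasinormal $p$-subgroup is $S$-semipermutable and then appealing to case (7) --- a cleaner route for (8) than your generic literature appeal. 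You instead give self-contained arguments for (1), (5) and, most substantially, (6): reducing modulo $H_{G}$, applying Lemma \ref{necessity} in $G/H_{G}$, and then lifting the $\Pi$-property back through $H_{G}\leq H$ by a partial converse of Lemma \ref{over}. That lifting lemma is correct as you sketch it: for a chief factor $L/K$ with $N\leq K$ the statement is read off in $G/N$; if $N\nleq K$ and $L\leq NK$ then $HK/K\cap L/K=L/K$ is normal in $G/K$; and if $N\cap L\leq K$ then $(HK\cap L)N=HK\cap LN$, the sections $(HK\cap L)/K$ and $(HK\cap LN)/KN$ are $G$-isomorphic, and normality of $L$ and $N$ gives $N_{G}(HK\cap L)=N_{G}(HK\cap LN)$, so the index and the relevant prime set are exactly those controlled by the hypothesis in $G/N$. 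So your version trades some citations for a short direct argument; the paper's version is shorter but leans entirely on \cite{li-2011}.
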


\begin{proof}
	 Statements \ref{1}-\ref{6} were proved in \cite[Propositions 2.2-2.3]{li-2011}, and the proof of \cite[Proposition 2.4]{li-2011} still works for statement \ref{7}.
	
	 \ref{8} Applying  \cite[Lemma 2.5]{Li-2008}, we see that $ H $ is $ S $-semipermutable in $ G $. Therefore, $ H $ satisfies the $ IC $-$ \Pi $-property in $ G $ by statement \ref{7}.
\end{proof}

\begin{corollary}\label{star}
	Let $ E $ be a normal subgroup of $ G $. Then $ E \leq Z_{\UU}(G) $ if and only if there exists a normal subgroup $ X $ of $ G $ such that $ F^{*}(E) \leq X \leq  E $ and $ X $ satisfies the following:
	
	\begin{enumerate}[fullwidth,itemindent=1em,label=\rm{(\arabic*)}]
		\item Every non-cyclic Sylow subgroup $ P $ of $ X $ has a subgroup $ D $ such that $ 1 < | D | < | P | $ and every subgroup of $ P $ of order $ |D| $ satisfies the $ IC $-$ \Pi $-property in $ G $;
		\item If $ P $ is a non-abelian $ 2 $-group and $ | D | = 2 $, we further suppose that every cyclic subgroup of $ P $ of order $ 4 $ satisfies the $ IC $-$ \Pi $-property in $ G $.
		
	\end{enumerate}	
	
\end{corollary}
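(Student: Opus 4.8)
The plan is to prove the two implications separately; the forward implication amounts to a repackaging of Theorem~\ref{main-result}. For the necessity, suppose $E\le Z_{\UU}(G)$ and take $X=E$, so $F^{*}(E)\le E=X\le E$ holds trivially. For any prime $p$ and any non-cyclic $P\in\mathrm{Syl}_{p}(X)$ we have $p\mid|G|$ and $E\le Z_{\UU}(G)\le Z_{p\UU}(G)$, so every subgroup of $P$ is a $p$-subgroup of $Z_{p\UU}(G)$; by Lemma~\ref{necessity} it satisfies the $\Pi$-property in $G$, hence the $IC$-$\Pi$-property in $G$ by \cite[Lemma 2.1(6)]{Li-Miao}. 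Taking $D\le P$ with $|D|=p$ (possible because $|P|\ge p^{2}$, so $1<|D|<|P|$) verifies conditions~(1) and~(2) at once.

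For the sufficiency, assume such an $X$ exists. Since $E\unlhd G$, Lemma~\ref{JG} applied with $\FF=\UU$ reduces the task to proving $F^{*}(E)\le Z_{\UU}(G)$. I would first record two consequences of the hypothesis on $X$. If $p$ is a prime for which some $P\in\mathrm{Syl}_{p}(X)$ is non-cyclic, apply Theorem~\ref{main-result} to the normal subgroup $X$, with the intermediate subgroup of that theorem also taken to be $X$ (so the condition on $F_{p}^{*}$ is automatic); the corollary's hypothesis supplies precisely conditions~(1)--(2) of Theorem~\ref{main-result} with $d=|D|$, whence $X\le Z_{p\UU}(G)$. If instead every Sylow $p$-subgroup of $X$ is cyclic, then $O_{p}(E)\le O_{p}(X)$ is a cyclic normal $p$-subgroup of $G$, so every chief factor of $G$ below $O_{p}(E)$ has order $p$, and therefore $O_{p}(E)\le Z_{\UU}(G)$.

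It remains to combine these into $F^{*}(E)\le Z_{\UU}(G)$. First I would show $E(E)=1$, i.e.\ $F^{*}(E)=F(E)$: otherwise $F^{*}(E)\le X$ contains a component $L$, so $L/Z(L)$ is non-abelian simple, $L$ has a non-cyclic Sylow $q$-subgroup for some prime $q$, a Sylow $q$-subgroup of $X$ is non-cyclic, and thus $X\le Z_{q\UU}(G)$ by the previous paragraph; but $L^{G}\le X$ has a non-abelian chief factor of $G$ of order divisible by $q$ below it, contradicting $X\le Z_{q\UU}(G)$. Then $F^{*}(E)=F(E)=\prod_{p}O_{p}(E)$, and for each $p$ with $O_{p}(E)\ne1$ either some Sylow $p$-subgroup of $X$ is non-cyclic, in which case $O_{p}(E)\le X\le Z_{p\UU}(G)$ forces $O_{p}(E)\le Z_{\UU}(G)$ (a normal $p$-subgroup contained in $Z_{p\UU}(G)$ lies in $Z_{\UU}(G)$), or every Sylow $p$-subgroup of $X$ is cyclic and $O_{p}(E)\le Z_{\UU}(G)$ as already noted; hence $F^{*}(E)=F(E)\le Z_{\UU}(G)$, and Lemma~\ref{JG} yields $E\le Z_{\UU}(G)$. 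I expect the step $E(E)=1$ to be the main obstacle: one must verify that the $IC$-$\Pi$-hypothesis on the non-cyclic Sylow subgroups of $X$, once fed through Theorem~\ref{main-result}, genuinely prevents $X$ — and hence $F^{*}(E)$ — from containing a non-abelian chief factor of $G$, which is exactly where the containment $F^{*}(E)\le X$ and the Sylow structure of $X$ must be used together.
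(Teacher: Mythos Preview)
Your proof is correct but follows a genuinely different route from the paper's. The paper argues at the level of $X$ throughout: ordering $\pi(X)=\{p_1<\cdots<p_s\}$, it shows $X\le Z_{p_1\UU}(G)$ (using Theorem~\ref{main-result} if $P_1$ is non-cyclic, or Burnside's normal $p$-complement theorem if $P_1$ is cyclic), deduces that $X$ is $p_1$-nilpotent since $p_1$ is the smallest prime, passes to the normal Hall $p_1'$-subgroup $O_{p_1'}(X)$, and iterates. This yields $X\le Z_{p_i\UU}(G)$ for every $i$, hence $X\le Z_{\UU}(G)$, and then Lemma~\ref{JG} finishes. Your approach instead descends immediately to $F^{*}(E)$: you kill the layer $E(E)$ by observing that a component forces a non-cyclic Sylow $q$-subgroup in $X$ (indeed $q=2$ always works, since a non-abelian simple group has non-cyclic Sylow $2$-subgroups), so $X\le Z_{q\UU}(G)$, which is incompatible with the non-abelian $G$-chief factor $L^{G}/Z(L^{G})\cong(L/Z(L))^{k}$ sitting below $X$; then you treat each $O_p(E)$ separately. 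The paper's iteration buys the stronger intermediate conclusion $X\le Z_{\UU}(G)$ and avoids any explicit discussion of components, while your argument is more structural and bypasses the Hall-subgroup descent. Regarding your stated concern about the step $E(E)=1$: it is sound once you note that $q\mid |L/Z(L)|$ (the Schur multiplier of $L/Z(L)$ involves only primes dividing $|L/Z(L)|$, so a nontrivial Sylow $q$-subgroup of $L$ cannot lie entirely in $Z(L)$), and that $G$ permutes the components of $L^{G}$ transitively, making $L^{G}/Z(L^{G})$ a single non-abelian $G$-chief factor of order divisible by $q$.
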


\begin{proof}
	 The necessity follows from Lemma \ref{necessity}. We only need to prove the sufficiency. Set $ \pi(X) = \{ p_{1}, p_{2} ,..., p_{s} \} $ and $ p_{1} < p_{2} < \cdots < p_{s} $. Let $ P_{i} $ be a Sylow $ p_{i} $-subgroup of $ G $.  If $ P_{1} $ is cyclic, then $ X $ is $ p_{1} $-nilpotent by \cite[Corollary 5.14]{Isaacs-2008}. Thus $ X\leq Z_{p_{1}\UU}(G) $.  If $ P_{1} $ is not cyclic, then $ X \leq  Z_{p_{1}\UU}(G)  $ by Theorem \ref{main-result}. Since $ p_{1} $ is the smallest prime divisor of $ |X| $, it folows from \cite[Chapter 2, Lemma 5.25(1)]{Guo-2015} that $ X $ is $ p_{1} $-nilpotent.
	 Now $ O_{p_{1}'}(X) $ is the normal Hall $ p_{1}' $-subgroup of $ X $ and $ p_{2} $ is the smallest prime divisor of $ |O_{p_{1}'}(X)| $ . By the similar argument as above, we have $ O_{p_{1}'}(X)\leq Z_{p_{2}\UU}(G) $.  Since $ (p_{1}, |O_{p_{1}'}(X)|)=1 $, we can get that $ X\leq O_{p_{2}\UU}(G) $. Repeat this procedure, finally we have $ X\leq O_{p_{i}\UU}(G) $ for all $ p_{i}\in \pi(X) $. This implies that  $ X\leq O_{\UU}(G) $. As a consequence, $ F^{*}(E)\leq Z_{\UU}(G) $. By Lemma \ref{JG}, we have $ E\leq Z_{\UU}(G) $. It follows from Lemma \ref{su}\ref{su1} that $ G\in \FF $.
\end{proof}

Combining Corollary \ref{star} and Lemma \ref{su}\ref{su1}, we can obtain  the following result.

\begin{corollary}
	Let $ \FF $ be a solubly saturated formation containing $ \UU $ and let $ E $ be a normal subgroup of $ G $ such that $ G/E\in \FF $. Then $ G\in \FF $ if there exists a normal subgroup $ X $ of $ G $ such that $ F^{*}(E) \leq X \leq  E $ and $ X $ satisfies the following:
	
	\begin{enumerate}[fullwidth,itemindent=1em,label=\rm{(\arabic*)}]
		\item Every non-cyclic Sylow subgroup $ P $ of $ X $ has a subgroup $ D $ such that $ 1 < | D | < | P | $ and every subgroup of $ P $ of order $ |D| $ satisfies the $ IC $-$ \Pi $-property in $ G $;
		\item If $ P $ is a non-abelian $ 2 $-group and $ | D | = 2 $, we further suppose that every cyclic subgroup of $ P $ of order $ 4 $ satisfies the $ IC $-$ \Pi $-property in $ G $.
		
	\end{enumerate}
\end{corollary}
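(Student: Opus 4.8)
The plan is to obtain this statement as an immediate consequence of Corollary \ref{star} together with the formation-theoretic Lemma \ref{su}. First I would observe that the condition imposed on $X$ in the present statement is literally the condition occurring in Corollary \ref{star}: $X$ is normal in $G$, $F^{*}(E)\leq X\leq E$, every non-cyclic Sylow subgroup $P$ of $X$ admits a subgroup $D$ with $1<|D|<|P|$ all of whose subgroups of order $|D|$ satisfy the $IC$-$\Pi$-property in $G$, and the supplementary cyclic-$4$-subgroup hypothesis holds when $P$ is a non-abelian $2$-group with $|D|=2$. Hence the ``if'' direction of Corollary \ref{star} applies verbatim and yields $E\leq Z_{\UU}(G)$.

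Next I would feed this into Lemma \ref{su}\,\ref{su1}. By hypothesis $\FF$ is a solubly saturated formation with $\UU\subseteq\FF$, the subgroup $E$ is normal in $G$, and $G/E\in\FF$; and we have just shown $E\leq Z_{\UU}(G)$. These are precisely the hypotheses of Lemma \ref{su}\,\ref{su1}, whose conclusion is $G\in\FF$. That completes the argument.

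I do not expect any genuine obstacle here: the entire mathematical weight has already been carried by Theorem \ref{main-result} (through the Sylow-by-Sylow reduction performed inside the proof of Corollary \ref{star}) and by the general hypercentral-embedding criterion of Lemma \ref{su}. The only matters requiring attention are bookkeeping: verifying that ``solubly saturated formation containing $\UU$'' is exactly the standing hypothesis of Lemma \ref{su}, and that the condition on $X$ stated here coincides word-for-word with the one in Corollary \ref{star}, so that no further reduction, induction, or case analysis is needed.
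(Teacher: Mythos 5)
Your proposal is correct and is exactly the paper's argument: the paper derives this corollary by combining Corollary \ref{star} (which yields $E\leq Z_{\UU}(G)$ from the stated hypotheses on $X$) with Lemma \ref{su}\ref{su1}. No further comment is needed.
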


The following result  is a consequence of Theorem \ref{main-result} and Lemma \ref{su}\ref{su2}.

\begin{corollary}
	Let $ \FF $ be a solubly saturated formation containing $ \UU $ and let $ E $ be a normal subgroup of $ G $ such that $ G/E\in \FF $. Then $ G/O_{p'}(G)\in \FF $ if there exists a normal subgroup $ X $ of $ G $ such that $ F_{p}^{*}(E) \leq X \leq  E $ and a Sylow $ p $-subgroup $ P $ of $ X $ satisfies the following:

	 \begin{enumerate}[fullwidth,itemindent=1em,label=\rm{(\arabic*)}]
	 	\item Every subgroup of $ P $ of order $ d $ satisfies the $ IC $-$ \Pi $-property in $ G $, where $ d $ is  a power of $ p $ such that $ 1<d<|P| $;
	 	\item If $ P $ is a non-abelian $ 2 $-group and $ d= 2 $, we further suppose that every cyclic subgroup of $ P $ of order $ 4 $ satisfies the $ IC $-$ \Pi $-property in $ G $.
	 \end{enumerate}
\end{corollary}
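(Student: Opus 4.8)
The plan is to derive this corollary as a short consequence of Theorem \ref{main-result} followed by Lemma \ref{su}\ref{su2}; since Theorem \ref{main-result} already carries all of the group-theoretic weight, the only real task is to check that the hypotheses line up, and I do not anticipate a substantive obstacle.

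First I would set $ N := E $ and observe that the data of the corollary put us exactly in the setting of Theorem \ref{main-result}: $ X $ is a normal subgroup of $ G $ with $ F_{p}^{*}(N) = F_{p}^{*}(E) \le X \le E = N $; condition (1) of the corollary is verbatim condition (1) of Theorem \ref{main-result}; and condition (2) of the corollary, ``$ P $ is a non-abelian $ 2 $-group and $ d = 2 $'', coincides with condition (2) of Theorem \ref{main-result}, ``$ p = d = 2 $ and $ P $ non-abelian'', since a $ 2 $-group forces $ p = 2 $. The one point needing a word is that $ p $ divides $ |N| = |E| $: the hypothesis on $ P $ presupposes a power $ d $ of $ p $ with $ 1 < d < |P| $ and a subgroup of $ P $ of order $ d $, so $ p \mid d \mid |P| $, and since $ P \le X \le E $ we get $ p \mid |E| $. (In the degenerate case where no such $ d $ exists, $ E $ is a normal $ p' $-subgroup of $ G $, hence $ E \le O_{p'}(G) $, and then $ G/O_{p'}(G) $ is a homomorphic image of $ G/E \in \FF $, so the conclusion is immediate because $ \FF $ is a formation.)

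With the hypotheses of Theorem \ref{main-result} verified, that theorem yields $ E = N \le Z_{p\UU}(G) $. Finally, since $ \FF $ is a solubly saturated formation containing $ \UU $, $ E \unlhd G $, and $ G/E \in \FF $, Lemma \ref{su}\ref{su2} applied with $ E \le Z_{p\UU}(G) $ gives $ G/O_{p'}(G) \in \FF $, which is the assertion.

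As for the ``main obstacle'': at the level of this corollary there is essentially none — the difficulty has all been absorbed into the proof of Theorem \ref{main-result} and its preliminary stages (Theorems \ref{order-d}, \ref{minimal} and \ref{maximal}). The only items requiring any care are the elementary bookkeeping of the parameter $ d $ described above and the check that the $ p = 2 $ clauses of the two statements match; both are routine.
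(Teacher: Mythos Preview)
Your proposal is correct and matches the paper's own approach exactly: the paper simply states that this corollary ``is a consequence of Theorem \ref{main-result} and Lemma \ref{su}\ref{su2}'' without giving further details. Your additional bookkeeping about $p\mid |E|$ and the degenerate $p'$-case is more careful than the paper, but harmless.
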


\end{sloppypar}
\end{document}